\documentclass[12pt]{amsart}

\usepackage{amsmath,amssymb,amsthm}
\usepackage{url}

\date{}
\author{Georg Braun}

\newtheorem{theorem}{Theorem}
\newtheorem{proposition}{Proposition}
\newtheorem{lemma}{Lemma}

\theoremstyle{definition}

\begin{document}

\title[Supercritical Branching Processes with Emigration]{On Supercritical Branching Processes with Emigration}

\thanks{{\em 2010 Mathematics Subject Classification.}  60J10, 60J80}  
\thanks{{\em Key words:} supercritical Galton Watson branching process;  immigration and emigration; recurrence and transience; random difference equation; autoregressive process; extinction probabilities; Kesten-Stigum theorem}
\thanks{{\em Address:}
Mathematisches Institut,
Universit\"at T\"ubingen,
Auf der Morgenstelle 10,
72076 T\"ubingen, Germany.
Email: georg.braun@uni-tuebingen.de}

\begin{abstract}
\noindent We study supercritical branching processes under the influence of an i.i.d.\ emigration component. We provide conditions, under which the lifetime of the process is finite respectively has a finite expectation. A new version of the Kesten-Stigum theorem is obtained and the extinction probability for a large initial population size is related to the tail behaviour of the emigration.
\end{abstract}

\maketitle

\section{Introduction}

Branching processes are a fascinating class of stochastic processes, which model the evolution of a population under the assumption that different individuals give independently of each other birth to a random number of children. In the present article, we will study the consequences of including an i.i.d.\ emigration component between consecutive generations in the supercritical regime. Intuitively speaking, the behaviour of this model is determined by the interplay of two opposite effects, namely the explosive nature of the branching processes and the decrease in the population size caused by emigration.

Formally, let $((\xi_{n,j})_{j \geq 1}, Y_n)_{n \geq 1}$ denote a sequence of i.i.d.\ random variables. Assume that the sequence $(\xi_{1,j})_{j \geq 1}$ is i.i.d.\ and that both $\xi:=\xi_{1,1}$ and $Y:=Y_1$ only take values in $\mathbb{N}_0$. Then we define a branching process with emigration $(Z_n)_{n \geq 0}$ by setting $Z_0 := k \in \mathbb{N}$ and recursively
\begin{align}
Z_{n+1} := \left( \sum\limits_{j=1}^{Z_n} \xi_{n+1,j} - Y_{n+1} \right)_+, \quad \quad n \geq 0 .
\end{align}
Throughout this article, we will focus on the supercritical case and more precisely assume that
\[ \lambda := E[ \xi] \in (1,\infty). \]
Naturally, our study will concentrate on the extinction time $\tau$ of $(Z_n)_{n \geq 0}$, which is defined by 
\[
\tau := \inf \{ n \geq 1 ~\vert~ Z_n = 0 \}, \quad \textnormal{where}~ \inf \emptyset := \infty.
\]
Amongst other things, we will prove that $\tau$ is almost surely finite if and only if $E[ \log_+ Y] = \infty$. Moreover, we will show that $E[ \tau] < \infty$ if there exists $\varepsilon > 0$ with
\begin{align}
\sum\limits_{n \geq 1} \prod\limits_{m=1}^n P \left[ Y \leq r (\lambda + \varepsilon)^m \right] < \infty \quad \textnormal{for~all~} r \in (0,\infty).
\end{align}
On the contrary, under some additional assumptions, we will also establish that $E[\tau]= \infty$ if there are $\theta \in (1,\infty)$ and $r \in (0,\infty)$ satisfying
\begin{align}
\sum\limits_{n \geq 1} \prod\limits_{m=1}^n P \left[ Y \leq r \lambda^m m^{-\theta} \right] = \infty.
\end{align}

The precise statements of all of our results are given in Section 2. We also present a new version of the Kesten-Stigum theorem and relate the behaviour the extinction probabilities
\[ q_k := P[\tau < \infty~\vert~Z_0=k], \quad k \geq 1, \]
to the tail behaviour of $Y$ as $k \rightarrow \infty$.\\

Our study is motivated by a simple observation, which links our model to subcritical autoregressive processes. We will explain the details in Section 3. To the best of our knowledge, this connection between branching processes with emigration and autoregressive processes has not been investigated in the literature so far.\\

The proofs of our results will be carried out in Section 4, 5 and 6.\\

While our criteria ensuring $E[ \tau] < \infty$ respectively $E[ \tau] = \infty$ are not exact, the following natural example illustrates that the gap in our characterisation is quite narrow.\\

\noindent \textit{Example 1}. Let $\lambda \in (1,\infty)$ and assume the existence of $c \in (0,\infty)$ and $n_0 \in \mathbb{N}$ with
\[  P \left[ \log Y > n \right] = \frac{c}{n} \quad \textnormal{for~all~} n \geq n_0.   \]
Then $E[ \log_+ Y] = \infty$ and hence $\tau < \infty$ almost surely. Moreover, as we will verify in Appendix 1, condition (2) holds if $c > \log \lambda$ and condition (3) is satisfied if $c \leq \log \lambda$.\\

\noindent Let us end this introduction by briefly mentioning previous literature results on branching processes with emigration.\\

The study of the critical case $\lambda = 1$ was initiated by Vatutin, who considered the case $Y \equiv 1$ and $\sigma^2 := \textnormal{Var}[\xi] \in (0,\infty)$ in \cite{Vat78}. Vatutin showed that $P[\tau > n]$ is regularly varying for $n \rightarrow \infty$ with exponent $-1-2/\sigma^2$, and, assuming that all moments of $\xi$ are finite, proved that $2 Z_n/n \sigma^2$, conditioned on being positive, converges weakly to the exponential distribution with mean one. These results were improved by Vinokurov and Kaverin in \cite{Vin87} and \cite{Kav91}, and more recently by Denisov, Korshunov and Wachtel in \cite{DKW16} for $\sigma^2 < 2$. The approach used in \cite{DKW16} relies on Markov chains with asymptotically zero drift and more generally allows a size-dependent offspring distribution as well as possibly both immigration and emigration.

More or less specific models of critical branching processes involving both immigration and emigration were studied by Nagaev and Khan in \cite{NK81} and by Yanev and Yanev in \cite{YY95}.

To the best of our knowledge, subcritical and supercritical branching processes with emigration have been previously studied mostly in continuous time. In this case, the population size changes if exactly one individual gives birth to a random number of children or if an emigration event, sometimes called catastrophe, occurs. If each individual has either $0$ or $2$ children, then the branching process, in fact, reduces to a birth-and-death process. The case of a catastrophe rate proportional to the population size was studied by Pakes in \cite{Pak86} and \cite{Pak89}. Moreover, Pakes studied this model with a size-independent emigration rate in \cite{PakI}, \cite{PakII} and \cite{PakIII}. In the supercritical regime, he related the almost sure extinction of the branching process to the condition $E[ \log_+ Y ] = \infty$, compare Theorem 2.1 and Corollary 3.2 in \cite{PakI}. This was also verified by Grey in \cite{Grey88}, who also proved this for our time-discrete model if $(\xi_{1,j})_{j \geq 1}$ and $Y_1$ are independent. As we have already stated, this independence assumption can be avoided.

\section{Preliminaries and Statement of Results}

To avoid degenerated cases, let us introduce a hypothesis (H), which we impose for the rest of this article. We denote by $(\tilde{Z}_n)_{n \geq 0}$ the renewal branching process given by $\tilde{Z}_0 := Z_0 = k$ and

\[
\displaystyle \tilde{Z}_{n + 1}  := \left\{ \begin{array}{c l}
\left( \displaystyle \sum\limits_{j=1}^{\tilde{Z}_n}  \xi_{n+1,j} - Y_{n+1} \right)_+ &\quad \textnormal{if}~\tilde{Z}_n \geq 1, \\ \\
k&\quad \textnormal{if}~\tilde{Z}_n = 0.
\end{array} \right. \]~\\
Note that by construction the Markov chains $(Z_n)_{n \geq 0}$ and $(\tilde{Z}_n)_{n \geq 0}$ share the same underlying state space, which however may depend on the choice of $Z_0 = \tilde{Z}_0 = k$.\\

\noindent In the following we shall not only assume $\lambda \in (1,\infty)$ but also\\

(H) The chain $\big( \tilde{Z}_n \big)_{n \geq 0}$ is irreducible and has an infinite state space.\\ 

\noindent In some of our results, we will also need Grey's restriction\\

(IND) The random variables $(\xi_{1,j})_{j \geq 1}$ and $Y_1$ are independent.\\
 
\noindent However, we have generally tried to avoid (IND) in our results. Let us mention that (H) holds if and only if the following two statements do.\\

(H1) $\quad \displaystyle P \bigg[ \sum\limits_{j=1}^k \xi_{1,k}  - Y_1 \geq k+1 \bigg] > 0$.\\

(H2) $\quad \displaystyle P \bigg[ \sum\limits_{j=1}^n \xi_{1,j} - Y_1 \leq n-1 \bigg] > 0~$ for all $n \geq 1$.\\

\noindent Roughly speaking, (H1) and (H2) ensure that neither emigration nor branching dominate each other completely for the possibly rather small initial state respectively for big population size. If $\lambda > 1$, then, by applying the law of large numbers, (H1) is always satisfied if $Z_0 = \tilde{Z}_0 = k \in \mathbb{N}$ is chosen large enough. On the other hand, (H2) holds, for example, if $P[ \xi =0]>0$ or if $Y$ is unbounded and (IND).\\

It is worth mentioning that both (H1) and (H2) are preserved if the number of initial individuals $Z_0 = k \geq 1$ is increased. The underlying state space of $(Z_n)_{n \geq 0}$ respectively $(\tilde{Z}_n)_{n \geq 0}$ may be affected by such a modification. However, this will not cause any problem in our study.\\

Let us now state the results.

\begin{theorem}
The following statements are equivalent.\\~\\
\begin{tabular}{c l}
\textnormal{(i)}&The process $\big( \tilde{Z}_n \big)_{n \geq 0}$ is recurrent, i.e.\ $\tau < \infty$ almost surely.\\ \\
\textnormal{(ii)}&$E[ \log_+ Y ] = \infty$.\\
\end{tabular}~\\
\end{theorem}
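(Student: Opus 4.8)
The plan is to prove the two implications of the equivalence separately, using throughout the elementary chain of equivalences
\[
E[\log_+ Y]<\infty
\iff
\sum_{m\ge1}P\!\left[Y> r\lambda^{m}\right]<\infty\ (\forall r>0)
\iff
\sum_{m\ge1}\frac{Y_m}{\lambda^{m}}<\infty\ \text{a.s.},
\]
which is the autoregressive heart of the matter: $\sum_m Y_m/\lambda^m$ is precisely the stationary series of a subcritical random difference equation with coefficient $1/\lambda$, and its convergence is governed by $E[\log_+ Y]$. The first equivalence follows by comparing the series with $\int_0^\infty P[\log_+Y>t]\,dt$; the second, for independent nonnegative summands, from the criterion $\sum X_m<\infty$ a.s.\ $\iff\sum E[X_m\wedge1]<\infty$ together with $\sum_m E[(Y/\lambda^m)\wedge1]\asymp E[\log_+Y]$.

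For the implication (ii)$\Rightarrow$(i) I would argue by contradiction, assuming $P[\tau=\infty]>0$. Coupling $(Z_n)$ with the pure branching process $(B_n)$ built from the same $\xi_{n,j}$ but without emigration and with $B_0=k$ gives, by monotonicity of $z\mapsto\sum_{j=1}^{z}\xi_j$, the pathwise bound $Z_n\le B_n$; and the nonnegative martingale $B_n/\lambda^{n}$ converges a.s.\ to a finite limit $W$. Fix $A>0$ and work on $\{W\le A\}$, so that $\sum_{j=1}^{Z_n}\xi_{n+1,j}\le B_{n+1}\le 2A\lambda^{n+1}$ for all large $n$. Because the generations are i.i.d., the events $\{Y_{n+1}>3A\lambda^{n+1}\}$ are independent, and by the equivalence above their probabilities sum to infinity; the second Borel--Cantelli lemma makes them occur infinitely often a.s. At any such large $n$ one has $Y_{n+1}>\sum_{j=1}^{Z_n}\xi_{n+1,j}$, forcing $Z_{n+1}=0$ and contradicting survival. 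Hence $P[\{\tau=\infty\}\cap\{W\le A\}]=0$, and letting $A\uparrow\infty$ gives $\tau<\infty$ a.s. This argument uses only independence across generations, so (IND) is not required.

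For (i)$\Rightarrow$(ii) I would prove the contrapositive: if $E[\log_+Y]<\infty$ then $P_k[\tau=\infty]>0$ for all large $k$, which by irreducibility (H) forces transience. Fix $\lambda'\in(1,\lambda)$ and set $\epsilon:=\lambda-\lambda'$. Since $\sum_m Y_m/{\lambda'}^m<\infty$ a.s., for small $\delta>0$ there is a deterministic $N$ with $P[Y_m\le\delta{\lambda'}^m\ \forall m>N]\ge 3/4$; on this event the emigration is dominated by the deterministic geometric sequence $\delta{\lambda'}^m$. Taking $k$ large enough that also $Z_N\ge{\lambda'}^N$ with high probability, I would bound $(Z_n)_{n\ge N}$ from below by the auxiliary process $\underline Z_{n+1}=\sum_{j=1}^{\underline Z_n}\xi_{n+1,j}-\delta{\lambda'}^{n+1}$ with $\underline Z_N={\lambda'}^N$, valid as long as $\underline Z_n\ge1$. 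Coupling $\underline Z_n$ with a branching process $B_n$ started from ${\lambda'}^N$ individuals and writing $\Delta_n:=B_n-\underline Z_n$, one checks that $\Delta_n$ is itself a Galton--Watson process with deterministic immigration $\delta{\lambda'}^{m}$; a one-line mean computation gives $E[\Delta_n]\le(\delta\lambda'/\epsilon)\,E[B_n]$, uniformly in $n$ and $N$, so choosing $\delta<\epsilon/(2\lambda')$ keeps the deficit below half of $B_n$ in mean.

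The main obstacle is to upgrade this mean domination to a pathwise statement $\underline Z_n\ge1$ for all $n$ on a set of positive probability, for no moment assumption beyond $E[\xi]=\lambda$ is available and the usual variance-based concentration is therefore off the table. The robust tool I would use is the Seneta--Heyde norming: for \emph{every} supercritical offspring law there are constants $c_n$ with $c_{n+1}/c_n\to\lambda$ and $B_n/c_n\to W'\in(0,\infty)$ a.s.\ on survival, whence $\tfrac1n\log B_n\to\log\lambda$ and in particular $B_n/{\lambda'}^n\to\infty$ on survival; the same applied to the immigrant subfamilies composing $\Delta_n$ pins both $B_n$ and $\Delta_n$ at the common exponential scale $\lambda^n$. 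The delicate point, where I expect the real work to lie, is to show that the oldest immigrant cohorts cannot let $\Delta_n$ overtake $B_n$: one controls this by a Borel--Cantelli argument guaranteeing, on a positive-probability event, that the surviving founders of $B$ dominate the effective emigration at every scale, so that $\underline Z_n\ge c\,{\lambda'}^n$ persists. This yields $Z_n\ge\underline Z_n\to\infty$ with positive probability, hence $P_k[\tau=\infty]>0$ and transience.
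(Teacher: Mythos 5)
Your direction (ii)$\Rightarrow$(i) is correct and is essentially the paper's argument: you dominate $Z_n$ by the emigration-free process, note that $B_{n+1}\le 2A\lambda^{n+1}$ eventually on $\{W\le A\}$ (the paper obtains the analogous bound $Z'_n\le(\lambda+\varepsilon)^n$ eventually via Markov's inequality and Borel--Cantelli rather than via the martingale limit), and then combine the divergence of $\sum_n P[Y>c\lambda^n]$ for every $c>0$ with the second Borel--Cantelli lemma, which is exactly the content of the paper's Lemma 1.

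The converse direction has a genuine gap, and it sits precisely where you say you ``expect the real work to lie.'' The mean bound $E[\Delta_n]\le(\delta\lambda'/\epsilon)\,E[B_n]$ does not upgrade to a pathwise bound $\Delta_n\le\tfrac12 B_n$ on an event of positive probability: $B_n$ is random, the events $\{\Delta_n>\tfrac12 E[B_n]\}$ are only individually small and not summable, and under Seneta--Heyde norming the limit variable may have infinite mean when $E[\xi\log_+\xi]=\infty$, so the total contribution of the immigrant cohorts to $\Delta_n$ cannot be controlled by their expected sizes. Your concluding ``Borel--Cantelli argument guaranteeing that the surviving founders dominate the effective emigration at every scale'' is exactly the statement that needs proof, not a proof. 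The missing idea is much simpler: truncate the offspring distribution. Replacing $\xi$ by $\xi\wedge M$ with $M$ so large that $E[\xi\wedge M]>1$ yields a process pathwise dominated by $(Z_n)$, so it suffices to prove positive survival probability for bounded offspring; then $\textnormal{Var}[\xi]<\infty$, Chebyshev's inequality makes the probabilities of the events $\bigl\{\sum_{j=1}^{\lfloor\lambda_0^n\rfloor}\xi_{n+1,j}<(\lambda-\tfrac{\varepsilon}{2})\lfloor\lambda_0^n\rfloor\bigr\}$ summable, Lemma 1 handles the events $\{Y_n>\lambda_1^n\}$, and an elementary induction gives $Z_n\ge\lfloor\lambda_0^n\rfloor$ for all $n\ge n_0$ on an intersection of events of positive probability (after enlarging the initial state using (H)). This is the paper's route and it avoids Seneta--Heyde theory entirely.
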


If the processes $(Z_n)_{n \geq 0}$ dies out almost surely, it is natural to ask whether its expected lifetime is finite or infinite. We provide the following answer to this question.

\begin{theorem}
Let $E[ \log_+ Y ] = \infty$.\\~\\
\begin{tabular}{c l}
\textnormal{(I)}&Assume that there are $\varepsilon > 0$ and $r \in (0,\infty)$ with\\ \\ 
&$\displaystyle \quad 0 < \sum\limits_{n \geq 1} \prod\limits_{m=1}^n P \left[ Y \leq r (\lambda + \varepsilon)^m \right]  < \infty$.\\ \\
&Then $E[ \tau] < \infty$, i.e.\ $(\tilde{Z}_n)_{n \geq 0}$ is positive recurrent.\\ \\
\textnormal{(II)}&Assume $E \big[ \xi^{1+\delta} \big] < \infty$ for a $\delta > 0$, \textnormal{(IND)}, and\\ \\
&$\displaystyle \quad \sum\limits_{n \geq 1} \prod\limits_{m=1}^n P \left[ Y \leq r \lambda^m m^{-\theta} \right]  = \infty \quad \textnormal{for~a~} \theta  \in (1, \infty),~ r \in (0,\infty)$.\\ \\
&Then $\tau < \infty$ a.s. and $E[ \tau] = \infty$, i.e.\ $\big( \tilde{Z}_n \big)_{n \geq 0}$ is nullrecurrent.\\
\end{tabular}
\end{theorem}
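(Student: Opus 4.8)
The guiding heuristic is the connection to a subcritical autoregressive process: once $Z_m$ is large, the law of large numbers gives $\sum_{j=1}^{Z_m}\xi_{m+1,j}\approx\lambda Z_m$, so $Z_{m+1}\approx\lambda Z_m-Y_{m+1}$ and $Z_m\approx\lambda^m\big(k-\sum_{i=1}^m\lambda^{-i}Y_i\big)$. Survival up to time $n$ is then essentially the event that the perpetuity-type sum $\sum_{i\le m}\lambda^{-i}Y_i$ stays below $k$, and the one-step death probability from level $z$ is of order $P[Y\ge\lambda z]$; this is what produces the products in conditions (I) and (II), since the $Y_m$ are i.i.d.\ across generations. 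In both parts $\tau<\infty$ a.s.\ is already known (in (II) from Theorem 1, as $E[\log_+Y]=\infty$), so everything reduces to estimating $\sum_n P[\tau>n]$. For the lower bound in (II) I would moreover first pass to a large initial value: by (H) the chain is irreducible and $\lambda>1$, so from $Z_0=k$ it reaches any prescribed level $z_0$ without dying within some $n_0$ steps with probability $\rho(z_0)>0$, and the Markov property gives $P[\tau>n\mid Z_0=k]\ge\rho(z_0)\,P[\tau>n-n_0\mid Z_0=z_0]$, reducing $E[\tau]=\infty$ to the same statement from $z_0$.

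For part (I) I would bound the population from above by the pure branching process. Let $(B_m)$ be the ordinary Galton--Watson process driven by the same offspring variables with $B_0=k$; then $Z_m\le B_m$ by a monotone coupling, and since $E[\xi]=\lambda$ and we fix $\varepsilon<\varepsilon_0$ (with $\varepsilon_0,r_0$ as in condition (I)), the process $B_m/(\lambda+\varepsilon)^m$ is a nonnegative supermartingale. Doob's maximal inequality then gives $P[R\ge L]\le k/L$ for $R:=\sup_m B_m/(\lambda+\varepsilon)^m$, \emph{without} any moment assumption on $\xi$ beyond $\lambda<\infty$. Survival up to $n$ forces $Y_m\le\sum_{j=1}^{Z_{m-1}}\xi_{m,j}\le B_m\le R(\lambda+\varepsilon)^m$ for every $m\le n$, so with a slowly growing truncation level $L_n=n^2$ I obtain
\[
P[\tau>n]\le\prod_{m=1}^n P\big[Y\le L_n(\lambda+\varepsilon)^m\big]+P[R>L_n].
\]
The decisive point is that the first event depends only on the $Y_m$, which are i.i.d.\ \emph{across generations}, so the product arises \emph{without} assuming (IND). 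Here $\sum_n P[R>n^2]\le\sum_n k/n^2<\infty$, and since $\varepsilon<\varepsilon_0$ we have $L_n(\lambda+\varepsilon)^m\le r_0(\lambda+\varepsilon_0)^m$ for all $m\ge m^\ast(n)=O(\log n)$, so each product is dominated, after discarding its first $O(\log n)$ factors, by the convergent tail of the series in condition (I); the remaining bookkeeping yields $\sum_n P[\tau>n]<\infty$, i.e.\ positive recurrence.

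For part (II) I would instead build an explicit survival event and bound $P[\tau>n]$ from below, working from a large initial value $z_0$ as above. Put $A_n=\{Y_m\le r\lambda^m m^{-\theta}\text{ for all }m\le n\}$, so that $P[A_n]=\prod_{m\le n}P[Y\le r\lambda^m m^{-\theta}]$, and let $G_n$ be the event that $\sum_{j=1}^{\lceil z_{m-1}\rceil}\xi_{m,j}\ge\lambda z_{m-1}(1-\eta_m)$ for all $m\le n$, where $\eta_m=m^{-2}$ and $z_m:=\lambda z_{m-1}(1-\eta_m)-r\lambda^m m^{-\theta}$. Because $\sum_m\eta_m<\infty$ and $\sum_m m^{-\theta}<\infty$---this is the one place $\theta>1$ is used---the envelope obeys $z_m\ge c\lambda^m$ for some $c>0$ once $z_0$ is large, and on $A_n\cap G_n$ a straightforward induction gives $Z_m\ge z_m\ge1$, hence $\tau>n$. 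The moment hypothesis $E[\xi^{1+\delta}]<\infty$ supplies, via a Marcinkiewicz--Zygmund/Fuk--Nagaev estimate, a lower large-deviation bound of the form $P[\text{step }m\text{ fails}]\le C\,m^{2(1+\delta)}\lambda^{-(m-1)\delta}z_0^{-\delta}$, whose sum over $m$ is $<\tfrac12$ once $z_0$ is large; hence $P[G_n]\ge\tfrac12$ uniformly in $n$. Finally (IND) makes $G_n$, a function of the offspring variables, independent of $A_n$, a function of the emigration variables, so that $P[\tau>n]\ge P[G_n]\,P[A_n]\ge\tfrac12\prod_{m\le n}P[Y\le r\lambda^m m^{-\theta}]$; summing and invoking condition (II) gives $E[\tau]=\infty$, i.e.\ null recurrence.

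The main obstacle in both parts is upgrading the autoregressive heuristic to genuine product bounds by controlling the offspring fluctuations. In part (I) it is matching the \emph{random} envelope $R$ to the deterministic series in condition (I): the supermartingale maximal inequality is precisely what lets this succeed with no offspring moments and without (IND), but the truncation bookkeeping---balancing $P[R>L_n]$ against the first $O(\log n)$ discarded factors, exploiting the gap $\varepsilon<\varepsilon_0$---must be done with care. In part (II) the delicate step is the \emph{uniform in $n$} lower bound $P[G_n]\ge\tfrac12$, which demands simultaneous control of infinitely many lower-tail deviations of offspring sums under only a $(1+\delta)$-th moment; this is exactly where $E[\xi^{1+\delta}]<\infty$, the summability afforded by $\theta>1$, and the reduction to a large initial value $z_0$ are all needed.
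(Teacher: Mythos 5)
Your part (II) is essentially the paper's own argument and is correct: a deterministic envelope $z_m \asymp \lambda^m$ built from the emigration bound $r\lambda^m m^{-\theta}$, a von Bahr--Esseen type $(1+\delta)$-moment estimate making the failure probabilities of the offspring sums summable and small once the starting level is large, (IND) to factorize the offspring event $G_n$ from the emigration event $A_n$, and (H) to reach a large state. Apart from cosmetic choices (multiplicative slack $1-m^{-2}$ where the paper subtracts $\kappa\lambda^{\eta n}$), this is the proof in Section 4.

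Part (I), however, has a genuine gap in the final summation. Your bound is $P[\tau>n]\le \prod_{m=1}^{n}P[Y\le L_n(\lambda+\varepsilon)^m]+P[R>L_n]$ with $L_n=n^2$, and you propose to control the product by discarding its first $m^\ast(n)=O(\log n)$ factors and comparing with the hypothesis. Writing $p_m:=P[Y\le r_0(\lambda+\varepsilon_0)^m]$ and $\Pi_n:=\prod_{m\le n}p_m$, this reduces the product to $\Pi_n/\Pi_{m^\ast(n)-1}$, and you need $\sum_n \Pi_n/\Pi_{m^\ast(n)-1}<\infty$; this does \emph{not} follow from $\sum_n\Pi_n<\infty$. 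Concretely, take $Y$ with $P\big[Y>r_0(\lambda+\varepsilon_0)^m\big]\sim \tfrac1m+\tfrac{2}{m\log m}$, a legitimate emigration law with $E[\log_+Y]=\infty$ for which the hypothesis of (I) holds because $\Pi_n\asymp n^{-1}(\log n)^{-2}$; then $\Pi_n/\Pi_{C\log n}\asymp (\log\log n)^2/(n\log n)$, whose sum diverges. The loss of the first $O(\log n)$ factors is forced on you by the weak tail bound $P[R>L]\le k/L$, which requires $L_n$ to grow faster than $n$ and hence $m^\ast(n)$ at least of order $\log n$; taking $L_n$ geometric makes $m^\ast(n)$ linear and is worse. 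The paper circumvents exactly this point: it conditions on the random time $T$ after which $Z'_m\le r(\lambda+\varepsilon)^{m-1}$ holds permanently, so the price $1/\Pi_n$ of then waiting for a large $Y$ is paid by the geometric decay $P[T=n]\le \tfrac{k}{r}(1+\varepsilon/\lambda)^{-n+2}$ (after enlarging $r$ to $r(\lambda+\varepsilon)^j$ so that $p_1$ is close to $1$). To repair your version you would need either a genuinely sharper tail bound on $R$ (unavailable without offspring moments) or a direct proof that $\sum_n\prod_{m\le n}P[Y\le n^2(\lambda+\varepsilon)^m]<\infty$ under condition (I); the comparison you describe does not supply it.
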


If the process $(Z_n)_{n \geq 0}$ survives forever with a positive probability, one might try to understand the distribution of $\tau$ and the extinction probabilities $(q_k)_{k \geq 1}$ in case of a large initial population size $k \geq 1$. For this purpose, we will use the concept of slow and regular variation in the sense of Karamata and assume\\

(REG) $P[Y>t]$ varies regularly for $t \rightarrow \infty$ with index $\alpha \in (0,\infty)$.\\

\noindent For a gentle introduction to slow and regular variation we refer the reader to \cite{Mik99}. A measurable function $L: [0,\infty) \rightarrow (0,\infty)$ is called slowly varying for $t \rightarrow \infty$, if for all $c \in (0, \infty)$ one has $L(ct)/L(t) \rightarrow 1$ as $t \rightarrow \infty$. Moreover, a measurable function $f: [0,\infty) \rightarrow (0,\infty)$ is regularly varying for $t \rightarrow \infty$, if there exists $\alpha \in \mathbb{R}$, $t_0 \in [0,\infty)$ and a slowly varying function $L$ satisfying $f(t) = t^\alpha L(t)$ for all $t \geq t_0$. In this case the constant $\alpha \in \mathbb{R}$ is unique and $- \alpha$ is called the index of $f$. 

\begin{theorem}
Assume \textnormal{(REG)} and let $N \in \mathbb{Z}_{\geq 2} \cup \{ \infty \}$. Then,
\begin{align*}
\limsup\limits_{k \rightarrow \infty} P[ \tau < N~\vert~Z_0 = k] ~P[Y>k]^{-1} \leq \sum\limits_{l=1}^{N-1} \lambda^{-\alpha l}.
\end{align*}
Furthermore, if all exponential moments of $\xi$ are finite, then
\[
\lim_{k \rightarrow \infty} ~P[ \tau < N~\vert~Z_0 = k]~P[Y>k]^{-1} = \sum\limits_{l=1}^{N-1} \lambda^{-\alpha l}.
\]
\end{theorem}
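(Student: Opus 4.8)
The plan is to reduce everything to a single one-step asymptotic estimate and then iterate it. Writing $f_n(k):=P[\tau\le n\mid Z_0=k]=P[Z_n=0\mid Z_0=k]$, the absorption of $(Z_n)$ at $0$ gives $P[\tau<N\mid Z_0=k]=f_{N-1}(k)$, and decomposing according to the first hitting time of $0$ yields
\[ f_n(k)=\sum_{m=0}^{n-1}E\big[\,\mathbf 1_{\{Z_m\ge1\}}\,f_1(Z_m)\ \big|\ Z_0=k\,\big],\qquad f_1(j)=P\Big[\,Y_1\ge\sum_{i=1}^{j}\xi_{1,i}\,\Big]. \]
The guiding heuristic is the single-big-jump principle: starting from a large $k$, the branching part makes $Z_m$ concentrate around $\lambda^m k$, so the only cheap route to extinction near time $m+1$ is one atypically large emigration $Y_{m+1}$ of size comparable to $\lambda^{m+1}k$, which by regular variation costs $\asymp\lambda^{-\alpha(m+1)}P[Y>k]$. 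Summing over $m=0,\dots,N-2$ then produces $\sum_{l=1}^{N-1}\lambda^{-\alpha l}$.

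Two analytic inputs feed this. First, regular variation gives $P[Y>ct]/P[Y>t]\to c^{-\alpha}$ for each $c>0$, the uniform Potter bounds, and the smoothness fact $P[Y\in(t-M,t]]=o(P[Y>t])$ for fixed $M$, which is what makes the probability of landing at a fixed small positive value negligible compared with the probability of overshooting to $0$. Second, I need large deviations for the offspring sums $S_j:=\sum_{i=1}^j\xi_{1,i}$. Because $\xi\ge0$ automatically has finite left exponential moments, the lower deviation $P[S_j<(\lambda-\varepsilon)j]$ decays exponentially in $j$ with no extra hypothesis, and this alone yields $\limsup_k f_1(k)/P[Y>k]\le\lambda^{-\alpha}$ under \textnormal{(REG)}. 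The assumption that all exponential moments of $\xi$ are finite is used precisely to get the matching upper deviation $P[S_j>(\lambda+\varepsilon)j]=o(P[Y>j])$, which upgrades the base estimate to $\lim_k f_1(k)/P[Y>k]=\lambda^{-\alpha}$ without invoking \textnormal{(IND)}.

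The core is then a one-step lemma: if $g(j)\le CP[Y>j]$ for all $j$ and $\limsup_j g(j)/P[Y>j]\le\gamma$, then $\limsup_k E[g(Z_1)\mathbf 1_{\{Z_1\ge1\}}\mid Z_0=k]/P[Y>k]\le\gamma\lambda^{-\alpha}$, with equality in the limit when $\lim_j g(j)/P[Y>j]=\gamma$ exists. I would prove it by splitting the range of $Z_1$ into (i) $1\le Z_1\le A$, negligible by the smoothness fact since $Z_1$ lives near $\lambda k$; (ii) $A<Z_1<(\lambda-\varepsilon)k$, bounded by $CP[Y>A]\cdot P[Z_1<(\lambda-\varepsilon)k]$, where the lower-deviation estimate gives $P[Z_1<(\lambda-\varepsilon)k]=O(P[Y>k])$, so this term is made arbitrarily small by taking $A$ large; and (iii) the bulk $(\lambda-\varepsilon)k\le Z_1\le(\lambda+\varepsilon)k$, on which $Z_1$ concentrates (weak law together with $P[Y_1>\varepsilon k/2]\to0$) and $g(Z_1)\approx\gamma P[Y>Z_1]\approx\gamma\lambda^{-\alpha}P[Y>k]$, letting $\varepsilon\downarrow0$ at the end. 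Applying the lemma iteratively, with $g$ running through the functions $k\mapsto E[\mathbf 1_{\{Z_m\ge1\}}f_1(Z_m)\mid Z_0=k]$ and starting from the base estimate for $f_1$, gives $\lim_k E[\mathbf 1_{\{Z_m\ge1\}}f_1(Z_m)\mid Z_0=k]/P[Y>k]=\lambda^{-\alpha(m+1)}$; summation then settles the case $N\in\mathbb Z_{\ge2}$.

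The case $N=\infty$ is where the main difficulty lies. The lower bound is free, since $q_k\ge f_{N-1}(k)$ forces $\liminf_k q_k/P[Y>k]\ge\sum_{l=1}^{N-1}\lambda^{-\alpha l}$ for every finite $N$, hence $\ge\sum_{l=1}^\infty\lambda^{-\alpha l}=(\lambda^\alpha-1)^{-1}$. For the upper bound I must interchange $\limsup_k$ with the increasing limit $n\to\infty$, that is, control the tail $\sum_{l>n}P[\tau=l\mid Z_0=k]=E[\mathbf 1_{\{Z_n\ge1\}}q_{Z_n}\mid Z_0=k]$ uniformly in $k$. Iterating the one-step lemma $n$ times bounds this by $C\lambda^{-\alpha n}P[Y>k]$ and hence sends it to $0$, but only once one has the a priori bound $q_k=O(P[Y>k])$ needed to start the iteration and, crucially, to handle excursions of $(Z_m)$ down to small population sizes, where the one-step asymptotics do not apply uniformly in $l$. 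I would obtain this a priori bound from the random difference equation and perpetuity representation developed in Section 3, using the classical fact that a perpetuity driven by a regularly varying input inherits a regularly varying tail; this uniform control over small-state excursions is the principal obstacle, and with it in hand the interchange is justified and the limit equals $\sum_{l=1}^\infty\lambda^{-\alpha l}$.
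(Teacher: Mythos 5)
Your route --- reducing everything to the one-step transfer estimate for $g\mapsto E[g(Z_1)\mathbf 1_{\{Z_1\ge 1\}}\mid Z_0=k]$ and iterating --- is genuinely different from the paper's (which uses a single-big-jump decomposition over which $Y_l$ is large, a self-referential inequality $C\le(\lambda-\varepsilon)^{-\alpha}+C(\lambda-\varepsilon-\delta)^{-\alpha}$ for the $N=\infty$ upper bound, and deterministic comparison recursions from Appendix~2), and you correctly isolate the crux: an a priori bound $q_k=O(P[Y>k])$ is needed to justify interchanging $\limsup_k$ with $N\to\infty$. But your proposed source for that bound does not exist. Section~3's random difference equation and perpetuity representation is derived under the standing assumption $\xi\equiv\lambda$; for random offspring there is no identity $Z_n=(\lambda^n k-\sum_j Y_j\lambda^{n-j})_+$, so $q_k$ is not the tail of a perpetuity. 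To get $C:=\limsup_k q_k\,P[Y>k]^{-1}<\infty$ one must first \emph{dominate} the extinction event by a perpetuity event: construct a deterministic minorant $kx_n$ with $x_n\ge c^n$ (Lemma~6), show via Cram\'er--Chernoff that the offspring sums stay above $\lambda_1 kx_n$ except on an event of probability $o(P[Y>k])$, deduce $\{\tau<\infty\}\cap A_k\subseteq\bigcup_n\{Y_n>k\lambda_0^n\}\cup(\text{finitely many }\{Y_n>kc_n\})$, and only then invoke the regular-variation tail of $\sum_n Y_n\lambda_0^{-n}$ (Grincevi\v{c}ius/Grey). This large-deviation domination is the actual content of the paper's Lemma~3 and is entirely absent from your sketch; without it the $N=\infty$ case is open.

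A second, smaller gap sits inside your one-step lemma, in region (i) ($1\le Z_1\le A$). You invoke the smoothness fact $P[Y\in(t-M,t]]=o(P[Y>t])$ at the random point $t=S_k$, which presupposes that $Y_1$ and $(\xi_{1,j})_j$ are independent; Theorem~3 does not assume (IND). Without (IND) you must instead squeeze $S_k$ into $[(\lambda-\varepsilon)k,(\lambda+\varepsilon)k]$ from \emph{both} sides with error $o(P[Y>k])$, so that $P[1\le Z_1\le A]\lesssim\big((\lambda-\varepsilon)^{-\alpha}-(\lambda+\varepsilon)^{-\alpha}\big)P[Y>k]$, which vanishes only after $\varepsilon\downarrow 0$. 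The upper deviation $P[S_k>(\lambda+\varepsilon)k]=o(P[Y>k])$ is not free under (REG) alone (the weak law gives no rate), so for the first half of the theorem you need the paper's truncation-and-monotonicity reduction (replace $\xi$ by $\xi\wedge M$, which can only increase $P[\tau<N]$, prove the bound with $\lambda_M=E[\xi\wedge M]$, let $M\to\infty$); you should state this reduction explicitly, since your remark that lower deviations ``alone'' suffice is true for the base estimate on $f_1$ but not for the iteration. With these two repairs --- the domination step for the a priori bound and the two-sided concentration plus truncation in region (i) --- your iteration scheme does deliver the theorem.
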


By choosing $N= \infty$ in Theorem 3 we in particular obtain results on the extinction probabilities $(q_k)_{k \geq 1}$ for $k \rightarrow \infty$.\\

Besides studying $\tau$ and $(q_k)_{k \geq 1}$, one can also try to understand the asymptotic behaviour of the process $(Z_n)_{n \geq 0}$ conditioned on its non-extinction. As in the case without any migration, Doob's martingale convergence theorem yields the existence of the almost sure limit
\[
W:= \lim\limits_{n \rightarrow \infty} \lambda^{-n} Z_n,
\]
which satisfies $0 \leq E[W] \leq k$.\\

\begin{theorem}~\\~\\
\noindent \begin{tabular}{l l}
\textnormal{(a)}&$P[W > 0] > 0$ if and only if\\ \\
\multicolumn{2}{c}{$\quad \quad \quad E[ \xi  \log_+ \xi] < \infty \quad \quad \textnormal{and} \quad \quad  E[ \log_+ Y] < \infty$.}\\ \\
&Furthermore, in this case\\ \\
\multicolumn{2}{c}{$\quad \quad \quad P[W > 0]=P[ \tau = \infty] $.}\\ \\
\textnormal{(b)}&Assume $P[ W > 0] > 0$, $P[ \xi = \lambda] < 1$ and \textnormal{(IND)}. Then,\\ \\
\multicolumn{2}{c}{$\quad \quad \quad P[a < W < b] > 0 \quad \textnormal{for~all~} 0 \leq a < b \leq \infty $.}\\~\\
\end{tabular}
\end{theorem}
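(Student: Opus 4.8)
The plan is to treat the two assertions separately, using throughout the pathwise coupling of $(Z_n)_{n\ge 0}$ with the pure Galton--Watson process $(\hat Z_n)_{n\ge 0}$ given by $\hat Z_0=k$ and $\hat Z_{n+1}=\sum_{j=1}^{\hat Z_n}\xi_{n+1,j}$ driven by the \emph{same} offspring variables. Since $(a-b)_+\le a$ for $a,b\ge 0$ and $(\,\cdot\,)_+$ is nondecreasing, induction yields $Z_n\le\hat Z_n$ for all $n$, whence $W\le\hat W:=\lim_n\lambda^{-n}\hat Z_n$ almost surely; the same monotonicity shows that $\lambda^{-n}Z_n$ is a nonnegative supermartingale. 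For part (a) I would first dispose of the two ``only if'' directions. If $E[\log_+Y]=\infty$, then Theorem 1 gives $\tau<\infty$ almost surely, so $Z_n=0$ eventually and $W=0$; if instead $E[\xi\log_+\xi]=\infty$, the classical Kesten--Stigum theorem yields $\hat W=0$ almost surely, and $W\le\hat W$ forces $W=0$. In either case $P[W>0]=0$, so both moment conditions are necessary.

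For the converse direction of (a) assume $E[\xi\log_+\xi]<\infty$ and $E[\log_+Y]<\infty$. Theorem 1 then gives $P[\tau=\infty]>0$, and on the survival event the positive part in the recursion is never active, so $Z_{n+1}=\sum_{j=1}^{Z_n}\xi_{n+1,j}-Y_{n+1}$ holds there for every $n$. Writing $M_{n+1}:=\lambda^{-(n+1)}\big(\sum_{j=1}^{Z_n}\xi_{n+1,j}-\lambda Z_n\big)$, this produces on $\{\tau=\infty\}$ the telescoping identity
\[
\lambda^{-n}Z_n=k+\sum_{i=1}^n M_i-\sum_{i=1}^n\lambda^{-i}Y_i .
\]
Since $E[\log_+Y]<\infty$ implies $\sum_{i\ge 1}\lambda^{-i}Y_i<\infty$ almost surely, the partial sums of $(M_i)$ converge on $\{\tau=\infty\}$ and there $W=k+\sum_{i\ge1}M_i-\sum_{i\ge1}\lambda^{-i}Y_i$. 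Because $\{W>0\}\subseteq\{\tau=\infty\}$ trivially, the remaining claim $P[W>0]=P[\tau=\infty]$ reduces to showing that $W>0$ almost surely on $\{\tau=\infty\}$. I expect this to be the main obstacle: as we assume only the $x\log x$ condition, not a finite variance, and do not impose (IND), the convergence of $\sum_i M_i$ cannot be read off from an $L^2$ estimate. I would instead run a Kesten--Stigum-type argument, either a size-biasing/spine change of measure or a truncation of the offspring, to prove $L^1$ convergence of $\lambda^{-n}Z_n$ on survival together with strict positivity of the limit; the point to be checked is that the scaled emigration losses, being summable, are dominated by the supercritical growth and hence cannot drive the limit to $0$.

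For part (b) I would exploit the self-similarity coming from (IND). Conditioning on the first generation and using that the restarted process is driven by variables independent of $(\xi_{1,j})_j$ and $Y_1$ gives the distributional identity $W\overset{d}{=}\lambda^{-1}W_{Z_1}$, where $W_m$ is the martingale limit for initial value $m$ and, under the coupling above, $m\mapsto W_m$ is stochastically nondecreasing. Note that $P[W>0]>0$ already forces $E[\xi\log_+\xi]<\infty$ and $E[\log_+Y]<\infty$ by part (a). The strategy is to show that the closed support of $\mathcal{L}(W)$ equals $[0,\infty)$, which is precisely the assertion $P[a<W<b]>0$ for all $0\le a<b\le\infty$. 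I would obtain this by transferring the corresponding classical fact for $\hat W$, whose law is absolutely continuous on $(0,\infty)$ with support $[0,\infty)$ once $P[\xi=\lambda]<1$ makes the offspring law non-degenerate, to $W$ by sandwiching: $W\le\hat W$ from above, and from below by comparing, on $\{\tau=\infty\}$ after a random time at which $Z_n$ is large, with a pure branching process whose emigration has become negligible. Unboundedness of $W$ (the case $b=\infty$) follows from this lower comparison, since on survival $Z_n\to\infty$ and the dominated pure branching limit already has infinite essential supremum, while absence of gaps near $0$ and in the interior comes from the smoothing effect of the random recursion $W\overset{d}{=}\lambda^{-1}W_{Z_1}$.

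The genuinely hard step is the strict positivity of $W$ on survival in part (a) under the sharp $x\log x$ hypothesis and without independence of $\xi$ and $Y$. Once that is in hand, the identity $P[W>0]=P[\tau=\infty]$ and the support statement in (b) follow from the comparison with the pure Galton--Watson limit and its classical properties.
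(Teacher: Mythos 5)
Your ``only if'' direction of (a) matches the paper's argument and is fine, but the two steps you yourself flag as the hard ones are exactly where the proof lives, and your sketches do not close them. For the positivity of $W$ on $\{\tau=\infty\}$, the telescoping identity $\lambda^{-n}Z_n=k+\sum_{i\le n}M_i-\sum_{i\le n}\lambda^{-i}Y_i$ together with the a.s.\ summability of $\sum_i\lambda^{-i}Y_i$ only shows that the limit exists on survival; it gives no lower bound, and nothing in it prevents the emigration term from cancelling the growth on a positive-probability subset of $\{\tau=\infty\}$. A spine/size-biasing or $L^1$-convergence argument for the process \emph{with} emigration would be a substantial new piece of work that you have not carried out, and it would moreover have to avoid (IND), which part (a) does not assume. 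The paper's route is different, and you are missing its key idea: by (H) one has $\{\tau=\infty\}=\{Z_n\to\infty\}$ a.s., and $W$ is monotone in the initial state, so it suffices to show $P[W>0\mid Z_0=k]\to1$ as $k\to\infty$. This is done by a one-step decomposition (Lemma 5): after generation $1$, reserve $k$ individuals to run the emigration-affected chain $Z^{(1)}$ and let the remaining $k_0(k)-k\to\infty$ individuals run an emigration-free Galton--Watson process $Z^{(2)}$; then $W\ge\lim_n\lambda^{-n}Z_n^{(2)}$ on the event that $Z^{(1)}$ never dies. Proposition 1 (namely $q_k\to0$ under $E[\log_+Y]<\infty$) and the classical Kesten--Stigum theorem applied to $Z^{(2)}$ with diverging initial size give both events probability tending to $1$, with no independence needed since $P[A\cap B]\ge P[A]+P[B]-1$.

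For (b), ``the smoothing effect of the random recursion'' is not an argument. The paper again uses the decomposition, now with (IND) so that on $\{Z_1=k_0\}$ the limit dominates a sum of two \emph{independent} pieces, one distributed as $W$ and one as $\lambda^{-1}W'$ for the pure Galton--Watson limit $W'$, and runs a contradiction: if $(0,b)$ were a gap in the support, choose $\varepsilon,\delta>0$ with $\tilde b:=\lambda^{-1}(b+\varepsilon)+\delta<b$ and produce mass in $(0,\tilde b)$ from $P[0<W<b+\varepsilon]>0$ together with $P[0<W'<\lambda\delta]>0$; the latter is where $P[\xi=\lambda]<1$ enters, via the strictly positive Lebesgue density of $W'$ on $(0,\infty)$. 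You name the right ingredients (absolute continuity of the Galton--Watson limit, a first-generation decomposition) but not the mechanism that turns them into the conclusion; as written, neither half of your proposal is a complete proof.
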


The proofs of Theorem 1 and Theorem 2 are quite similar and therefore together contained in Section 4. The arguments needed for the other two theorems are rather different and slightly more technical. Therefore the proofs of Theorem 3 and Theorem 4 are carried out separately in Section 5 and Section 6.

\section{Relation to the Random Difference Equation}

\noindent In this section we always assume $\xi \equiv \lambda$. Then (1) simplifies into
\[
Z_{n+1} = ( \lambda Z_n - Y_{n+1})_+, \quad n \geq 0.
\]
Consider the process $\big( \hat{Z}_n  \big)_{n \geq 0}$ defined by $\hat{Z}_0 := Z_0 = k$ and
\[
\hat{Z}_{n+1} := \lambda \hat{Z}_n - Y_{n+1}, \quad n \geq 0.
\]
Then, by induction over $n \geq 0$, we find $Z_n = \big(\hat{Z}_n \big)_+$ and
\[
\hat{Z}_n = \lambda^n k -  \sum\limits_{j=1}^{n} Y_j \lambda^{n-j}.
\]

\noindent Hence we obtain for $m \geq 0$
\begin{align}
P[ Z_{n} > m ] &= P[ \hat{Z}_{n} > m] = P \bigg[ k - \sum\limits_{j=1}^n \lambda^{-j} Y_j > m \lambda^{-n} \bigg]\\
& = P \left[ \hat{X}_n < k - m \lambda^{-n} \right],
\end{align}
where $\big( \hat{X}_n \big)_{n \geq 0}$ is the autoregressive process defined by $\hat{X}_0 := 0$ and
\begin{align*}
\hat{X}_{n+1} := \lambda^{-1} \hat{X}_n + Y_{n+1}, \quad n \geq 0.
\end{align*}
The study of this random difference equation was initiated by Kesten in \cite{Kes73} in the more general random-coefficient version
\[ X_{n+1} := A_{n+1} X_n + Y_{n+1}, \quad n \geq 0,   \]
where the sequence $(A_n,Y_n)_{n \geq 1}$ is typically assumed to be i.i.d.\ and independent of $X_0$. In the contractive or subcritical case
\[ E[ \log A_1] < 0   \]
it is well-established, that the condition
\[ E[ \log_+ Y] < \infty  \]
is related the existence of a stationary solution for $(X_n)_{n \geq 0}$, see e.g.\ Theorem 1.6 in \cite{Ver79} or Theorem 2.1.3 in \cite{BDM16}. This can be explained in the following way. For fixed $n \geq 0$ we know by exchangeability
\[
 X_n \overset{\textnormal{d}}{=} \sum\limits_{j=1}^n A_1 \cdots A_{j} Y_{j+1} =: X'_n,
\]
and $X'_n \rightarrow X_\infty$ a.s. for $n \rightarrow \infty$, provided the existence of the limit
\[
X_\infty := \sum\limits_{n \geq 0} A_1 \cdots A_n Y_{n+1}.
\]
The existence of this limit is related to the condition $E[ \log_+ Y] < \infty$. For $A_1 \equiv \lambda^{-1}$ and $Y_1 \geq 0$ we can e.g.\ use Lemma 1 of Section 4 to conclude $X_\infty < \infty$ almost surely if $E[ \log_+ Y_1] < \infty$ and $X_\infty = \infty$ almost surely otherwise. Since inserting $m=0$ into (4) and (5) gives
\begin{align}
P[ \tau = \infty]  = \lim\limits_{n \rightarrow \infty} P[Z_n > 0] = \lim\limits_{n \rightarrow \infty} P \big[ \hat{X}_n < k \big] = P [ \hat{X}_\infty < k],
\end{align}
where
\[
\hat{X}_\infty := \lambda^{-1} \sum\limits_{n \geq 0} \lambda^{-n} Y_{n+1},
\]
we can recover the statement of Theorem 1 in this way. Moreover, consider (6) and the following result obtained by Grincevi\v{c}ius in \cite{Gri75}.

\begin{theorem}[Grincevi\v{c}ius]
Assume that $P[ Y_1 > t]$ is regularly varying for $t \rightarrow \infty$ with index $\alpha \in (0,\infty)$, $E[ A_1^\alpha] < 1$ and $E[ A_1^\beta] < \infty$ for some $0 < \beta < \alpha$. Then,
\[
\lim\limits_{k \rightarrow \infty} P[X_\infty > k]~P[Y_1>k]^{-1} = \sum\limits_{j=0}^\infty E[ A_1^\alpha]^j.
\]
\end{theorem}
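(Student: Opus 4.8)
The plan is to represent the stationary solution as the perpetuity $X_\infty = \sum_{n \geq 0} \Pi_n Y_{n+1}$ with $\Pi_n := A_1 \cdots A_n$ and $\Pi_0 := 1$, and to show that its tail is governed by the single big jump principle: for large $k$ the event $\{X_\infty > k\}$ is, up to negligible corrections, caused by exactly one summand $\Pi_n Y_{n+1}$ being large. Since $\sum_{j \geq 0} E[A_1^\alpha]^j = (1 - E[A_1^\alpha])^{-1}$ converges by the hypothesis $E[A_1^\alpha] < 1$, the target constant is finite. As a preliminary I would record that $p \mapsto \log E[A_1^p]$ is convex with value $0$ at $p = 0$ and negative at $p = \alpha$, so that $E[A_1^p] < 1$ for every $p \in (0, \alpha]$; after shrinking $\beta$ so that $\beta < \min(1,\alpha)$, subadditivity of $x \mapsto x^\beta$ gives the elementary moment bound $E[X_\infty^\beta] \leq E[Y_1^\beta]/(1 - E[A_1^\beta]) < \infty$, which I will need in order to discard the tail of the series.

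The core estimate is a Breiman-type identity for the individual terms. Because the pairs $(A_n, Y_n)$ are i.i.d., the weight $\Pi_n$ is independent of $Y_{n+1}$, and $Y_{n+1}$ is regularly varying with index $-\alpha$; hence one expects $P[\Pi_n Y_{n+1} > k] \sim E[\Pi_n^\alpha]\, P[Y_1 > k] = E[A_1^\alpha]^n\, P[Y_1 > k]$ as $k \to \infty$, and summing these contributions formally reproduces the claimed limit. To turn this into a proof I would split $X_\infty = S_N + R_N$ with $S_N := \sum_{n=0}^{N-1} \Pi_n Y_{n+1}$ and $R_N := \sum_{n \geq N} \Pi_n Y_{n+1}$, establish the statement for the finite sum $S_N$, and then let $N \to \infty$. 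The remainder factorises as $R_N = \Pi_N \tilde X$, where $\tilde X := \sum_{n \geq N} (\Pi_n/\Pi_N) Y_{n+1}$ is an independent copy of $X_\infty$, so that $E[R_N^\beta] = E[A_1^\beta]^N E[X_\infty^\beta] \to 0$; by Markov's inequality at exponent $\beta$ and a Potter bound the remainder then contributes, after division by $P[Y_1 > k]$, an error that vanishes as $N \to \infty$.

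The two-sided bounds would be assembled as follows. For the finite sum, the regular variation of $Y_1$ makes each $\Pi_n Y_{n+1}$ subexponential, so the tail of $S_N$ equals the sum of the tails once two simultaneously large summands are ruled out; this yields $\lim_k P[S_N > k]\,P[Y_1>k]^{-1} = \sum_{n=0}^{N-1} E[A_1^\alpha]^n$. The lower bound for $X_\infty$ follows by restricting to $S_N$ and letting $N \to \infty$, giving $\liminf_k P[X_\infty > k]\,P[Y_1>k]^{-1} \geq (1-E[A_1^\alpha])^{-1}$. For the upper bound I estimate $P[X_\infty > k] \leq P[S_N > (1-\eta)k] + P[R_N > \eta k]$, send $k \to \infty$, then $\eta \to 0$ and $N \to \infty$, combining the finite-sum asymptotics with the remainder estimate.

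The main obstacle is the weakness of the moment hypothesis: one is given only $E[A_1^\beta] < \infty$ for some $\beta < \alpha$, whereas the classical Breiman lemma requires a moment strictly above $\alpha$, and the products $\Pi_n$ may have infinite moments of every order exceeding $\alpha$. Circumventing this is the heart of the argument: I would truncate the multiplicative weights, replacing $A_j$ by $A_j \wedge M$ or conditioning on $\Pi_n$ not being atypically large, and exploit the strict inequality $E[A_1^\alpha] < 1$ together with Potter's bounds to transfer the regular variation of $Y_1$ through $\Pi_n$ while keeping all error terms summable in $n$. A secondary difficulty is that $A_n$ and $Y_n$ may be dependent within a pair and the weights $\Pi_n$ share factors, so one cannot simply condition on the $\sigma$-algebra generated by the $A_j$ to decouple; instead the i.i.d.\ pair structure and the self-similar decomposition $X_\infty = \Pi_N \tilde X + S_N$ must be used to isolate one scale at a time.
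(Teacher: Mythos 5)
You should first note that the paper never proves Theorem~5: it is imported verbatim from Grincevi\v{c}ius \cite{Gri75}, with the explicit remark that Grey \cite{Grey94} questioned the original argument and supplied a corrected proof. So there is no in-paper proof to compare against; the closest relative is the author's own proof of Lemma~3 in Section~5, which obtains the analogous upper bound for $q_k$ not by summing single-big-jump contributions but by a bootstrap: one first shows $C:=\limsup_k q_k\,P[Y>k]^{-1}<\infty$ by a separate argument, and then one step of the recursion yields the self-referential inequality $C\leq(\lambda-\varepsilon)^{-\alpha}+C(\lambda-\varepsilon-\delta)^{-\alpha}$, which is solved for $C$. That is essentially Grey's route, and it is designed precisely to avoid the two places where your direct approach breaks down.

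The first and decisive gap is your remainder estimate. Markov's inequality at exponent $\beta<\alpha$ gives $P[R_N>\eta k]\leq E[A_1^\beta]^N E[X_\infty^\beta](\eta k)^{-\beta}$, but dividing by $P[Y_1>k]=k^{-\alpha}L(k)$ produces a quantity of order $k^{\alpha-\beta}/L(k)\rightarrow\infty$ for every fixed $N$; since the limits are taken with $k\rightarrow\infty$ first, the decaying factor $E[A_1^\beta]^N$ never gets to act, and the remainder is not $o(P[Y_1>k])$. To make $R_N$ negligible at the scale $P[Y_1>k]$ you need a tail bound of order $\alpha$ for $X_\infty$ itself, i.e.\ essentially the finiteness of $\limsup_k P[X_\infty>k]P[Y_1>k]^{-1}$ that you are trying to prove --- this circularity is exactly what the fixed-point bootstrap breaks. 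The second gap is the Breiman step: under the stated hypotheses only $E[\Pi_n^\alpha]=E[A_1^\alpha]^n<\infty$ is available, so Fatou gives the $\liminf$ direction of $P[\Pi_n Y_{n+1}>k]\sim E[A_1^\alpha]^n P[Y_1>k]$, but the matching upper bound is false in general without a moment strictly above $\alpha$ or additional tail comparison between $\Pi_n$ and $Y$; truncating $A_j$ at $M$ only recovers the lower bound, because the contribution of $\{\Pi_n>M\}$ to $P[\Pi_n Y_{n+1}>k]$ is not controlled by the smallness of $E[\Pi_n^\alpha 1\{\Pi_n>M\}]$. This is precisely the defect in Grincevi\v{c}ius's original proof, and your proposal names it but does not close it. The repair is to use the distributional identity $X_\infty\overset{\mathrm{d}}{=}A_1\tilde X_\infty+Y_1$ with $\tilde X_\infty$ an independent copy of $X_\infty$: prove $C<\infty$ first, then split $\{X_\infty>k\}$ according to whether $Y_1$ is large, moderate, or small to obtain $C\leq 1+E[A_1^\alpha]\,C$ up to errors vanishing as the cutoffs are removed; your Fatou lower bound then completes the two-sided estimate.
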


In our specific case $\xi \equiv \lambda$ we can apply this theorem with the assumption $A_1 \equiv \lambda^{-1}$ to recover the asymptotic formula obtained for $(q_k)_{k \geq 1}$ as $k \rightarrow \infty$ in Theorem 3. For clarity, let us state that $X_\infty$ and $\hat{X}_\infty$ differ by the constant $\lambda^{-1}$, which explains why the limit in Theorem 3 is $\lambda^{-\alpha}/(1-\lambda^{-\alpha})$ and not $1/(1-\lambda^{-\alpha})$.

It is worth mentioning that Grey questioned some parts of the original proof and gave a new improved version of Theorem 5 in \cite{Grey94}.\\

Finally, observe that in our case all random variables involved in the definition of $(\hat{X}_n)_{n \geq 0}$ are nonnegative and hence Kellerer's theory of recurrence and transience of order-preserving Markov chains is available, see \cite{Kel92} and \cite{Kel06}. By again inserting $m=0$ in (4) and (5) we obtain
\[
E[\tau] = \sum\limits_{n \geq 0} P[ \tau > n] = \sum\limits_{n \geq 0} P[ Z_n > 0] = \sum\limits_{n \geq 0} P \big[ \hat{X}_n < k \big],
\]
and hence conclude that $E[ \tau] = \infty$ if and only if $( \hat{X}_n)_{n \geq 0}$ is recurrent. A recent result by Zerner, see \cite{Zer18}, states, that this rather generally is the case if and only if there exists $b \in (0, \infty)$ with
\[
 \sum\limits_{n \geq 1} \prod\limits_{m=1}^n P[ Y \leq b \lambda^m ] = \infty.
\]
Clearly, this result characterizes the finiteness of $E[\tau]$ exactly and hence more precisely than Theorem 2.\\

Interestingly enough, Zerner's criterion does not only apply to more general random-coefficient autoregressive processes but also to subcritical branching processes with immigration. For this class of branching processes the existence of a stationary solution is again related to the logarithmic moment of the immigration component, see e.g.\ \cite{Qui70} or Theorem A in \cite{Pak79}.

\section{Proofs of Theorem 1 and Theorem 2}

\noindent We start to prepare our proofs with two simple lemmas.

\begin{lemma}
Let $(U_n)_{n \geq 0}$ be a sequence of i.i.d.\ nonnegative random variables. Then
\[
\limsup\limits_{n \rightarrow \infty} \frac{U_n}{n} = \left\{ \begin{array}{l l}
0,&\quad \textnormal{if} \quad E[U_1] < \infty\\ \\
\infty,&\quad \textnormal{if} \quad E[U_1] = \infty.
\end{array} \right.
\]
\end{lemma}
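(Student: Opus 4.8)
The plan is to reduce both cases to the Borel--Cantelli lemmas applied to the independent events $\{U_n > cn\}$ for an arbitrary level $c \in (0,\infty)$, after first recording the standard dictionary between the finiteness of $E[U_1]$ and the summability of the series $\sum_n P[U_1 > cn]$.

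First I would establish this dictionary via an integral comparison. Since $t \mapsto P[U_1 > t]$ is nonincreasing and $U_1 \geq 0$, splitting $E[U_1] = \int_0^\infty P[U_1 > t]\,dt$ over the intervals $[cn, c(n+1)]$ and bounding the integrand by its endpoint values gives, for every $c \in (0,\infty)$,
\[
\sum_{n \geq 1} P[U_1 > cn] \;\leq\; \frac{1}{c}\, E[U_1] \;\leq\; \sum_{n \geq 0} P[U_1 > cn].
\]
Consequently $\sum_{n \geq 1} P[U_1 > cn] < \infty$ holds for one $c > 0$ if and only if it holds for all of them, and this is equivalent to $E[U_1] < \infty$.

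For the first case, assume $E[U_1] < \infty$ and fix $c > 0$. By the display above, $\sum_n P[U_n > cn] = \sum_n P[U_1 > cn] < \infty$, so the first Borel--Cantelli lemma yields $P[U_n > cn \textnormal{ infinitely often}] = 0$, whence $\limsup_n U_n/n \leq c$ almost surely. Intersecting the corresponding full-measure events along a sequence $c \downarrow 0$ forces $\limsup_n U_n/n = 0$ almost surely. For the second case, assume $E[U_1] = \infty$ and fix $c > 0$; now $\sum_n P[U_n > cn] = \infty$, and here the events $\{U_n > cn\}$ are independent because the $U_n$ are. The converse (second) Borel--Cantelli lemma then gives $P[U_n > cn \textnormal{ infinitely often}] = 1$, so $\limsup_n U_n/n \geq c$ almost surely; letting $c \uparrow \infty$ along a countable sequence yields $\limsup_n U_n/n = \infty$ almost surely.

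I expect essentially no obstacle beyond careful bookkeeping. The only two points deserving attention are the invocation of independence of the $U_n$ to apply the converse Borel--Cantelli lemma in the divergent case (the direct half needs no independence), and verifying that the exceptional null sets accumulate harmlessly as $c$ runs over a countable set dense near $0$ respectively near $\infty$.
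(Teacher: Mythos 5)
Your proof is correct and follows exactly the same route as the paper's: the equivalence between $E[U_1]<\infty$ and summability of $\sum_n P[U_1>cn]$ for each $c>0$, followed by the first Borel--Cantelli lemma in the finite-mean case and the second (using independence) in the infinite-mean case. The paper merely states this in two sentences; your version supplies the integral-comparison details and the handling of the null sets over a countable family of levels $c$.
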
~\\

\begin{proof}[Proof]
Recall that $E[U_1] = \infty$ if and only if $\sum_{n \geq 0} P[ U_n \geq c n]$ diverges for all $c \in (0,\infty)$. Hence the claim follows by applying both the first and second part of the Borel-Cantelli lemma.
\end{proof}

The use of Lemma 1 is known in the context of supercritical branching processes with immigration when it is natural to ask when the immigration component accelerates the asymptotic growth. In this context, Lemma 1 allows one to easily obtain some of Seneta's classical results, see \cite{Sen70} and e.g.\ Section 3.1.1 in Dawson's lecture notes \cite{Daw17}.\\

We will also apply the following concentration estimate, which can be seen as a weaker but more general form of Chebyshev's inequality.

\begin{lemma}
Let $(V_n)_{n \geq 0}$ denote a sequence of i.i.d.\ random variables and $S_n := \sum_{j=1}^n V_j$ for all $n \geq 1$. Assume $E[V_1]=0$ and that there exists $\delta \in (0,1]$ with $c:= E[ |V_1|^{1+\delta}] < \infty$. Then,
\begin{align*}
P[ |S_n | > t] \leq  2 c n t^{-1-\delta} \quad \textnormal{for~all~} n \geq 1,~t \in (0,\infty).
\end{align*}
\end{lemma}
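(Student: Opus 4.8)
The plan is to reduce the tail bound to a moment estimate and then apply Markov's inequality. Set $p := 1 + \delta \in (1,2]$. Since $|S_n|^p$ is nonnegative, Markov's inequality yields, for every $t \in (0,\infty)$,
\[
P[\,|S_n| > t\,] = P\big[\,|S_n|^p > t^p\,\big] \le t^{-p}\, E\big[|S_n|^p\big].
\]
Thus the claim follows immediately once we establish the moment bound
\[
E\big[|S_n|^{1+\delta}\big] \le 2 c\, n \qquad \text{for all } n \ge 1,
\]
which is a form of the von Bahr--Esseen inequality. Proving this estimate is the main content of the lemma; the passage to the tail bound above is routine.

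I would prove the moment bound by induction on $n$. The base case $n = 1$ is trivial, as $E[|S_1|^{1+\delta}] = c \le 2c$. For the inductive step, write $S_n = S_{n-1} + V_n$ and recall that $S_{n-1}$ and $V_n$ are independent with $E[V_n] = 0$. Conditioning on $S_{n-1}$, it suffices to show that for every fixed real $a$ one has
\[
E\big[\,|a + V_n|^{1+\delta}\,\big] \le |a|^{1+\delta} + 2\, E\big[|V_n|^{1+\delta}\big] ;
\]
taking expectation over $S_{n-1} = a$ and combining with the induction hypothesis $E[|S_{n-1}|^{1+\delta}] \le 2c(n-1)$ then gives $E[|S_n|^{1+\delta}] \le 2c(n-1) + 2c = 2cn$, as desired.

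The increment estimate in turn rests on an elementary pointwise inequality: for $1 \le p \le 2$ and all real numbers $a, b$,
\[
|a + b|^p \le |a|^p + p\,\operatorname{sgn}(a)\,|a|^{p-1}\, b + 2\,|b|^p .
\]
Applying this with $b = V_n$ and taking expectations, the linear middle term vanishes because $E[V_n] = 0$, which delivers exactly the required increment bound. The one genuinely technical point is verifying this pointwise inequality together with its constant $2$. By positive homogeneity and the symmetry $(a,b) \mapsto (-a,-b)$ one may reduce to the case $a = 1$, where the inequality becomes $g(b) \le 0$ for $g(b) := |1+b|^p - 1 - p\,b - 2\,|b|^p$; since $p > 1$ gives $g(0) = 0$ and $g'(0) = 0$, a short analysis of the sign of $g'$ on the intervals separated by the non-smooth points $b = 0$ and $b = -1$ confirms that $b = 0$ is a global maximum. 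I expect this elementary but slightly fiddly verification to be the only real obstacle, everything else being bookkeeping.
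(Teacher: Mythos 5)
Your proof is correct and follows the same route as the paper: reduce the tail bound to the moment estimate $E[|S_n|^{1+\delta}] \leq 2cn$ and then apply Markov's inequality. The only difference is that the paper simply cites von Bahr and Esseen for this moment bound, whereas you supply a self-contained proof of it; your key pointwise inequality $|a+b|^p \leq |a|^p + p\,\mathrm{sgn}(a)|a|^{p-1}b + 2|b|^p$ does hold for $1 < p \leq 2$ (after reducing to $a=1$, $b=-s$, one checks that $2s^p + 1 - ps - |1-s|^p$ is nondecreasing on $[0,1]$ and on $[1,\infty)$ using the subadditivity and monotonicity of $t \mapsto t^{p-1}$), so your induction closes.
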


\begin{proof}
By a classical result due to von Bahr and Esseen, see \cite{vBE65}, 
\[
E \left[ \vert S_n \vert^{1+\delta}  \right] \leq 2 c n \quad \textnormal{for~all~} n \geq 1.
\]
Therefore the claim follows by applying Markov's inequality.
\end{proof}~

Let us now briefly introduce some notation. The branching process, which is obtained from $(Z_n)_{n \geq 0}$ by neglecting any emigration, will be denoted by $(Z'_n)_{n \geq 0}$. Formally, $Z'_0 := k \geq 1$ and
\[
Z'_{n+1} := \sum\limits_{j=1}^{Z'_n} \xi_{n+1,j}, \quad n \geq 0.
\]
We will also work with the stopping time
\[ \tau' := \inf \{ n \geq 1 ~ \big\vert~ Z'_{n+1} \leq Y_{n+1} \}.   \]
Observe that by definition $Z_n \leq Z'_n$ and $\tau \leq \tau'$ almost surely.

\begin{proof}[Proof of Theorem 1]
(ii)$\Longrightarrow$(i). Choose a $\varepsilon > 0$ and set
\[
T := \inf \left\{ n \geq 1~\vert~ Z'_m \leq ( \lambda + \varepsilon)^m~ \textnormal{for~all~} m \geq n \right\}.
\]
Then, for fixed $n \geq 1$, Markov's inequality gives
\[  P \left[ Z'_n > ( \lambda + \varepsilon)^n \right] \leq  k \left( 1 + \frac{\varepsilon}{\lambda} \right)^{-n}.
\]
Hence the Borel-Cantelli lemma gives $T < \infty$ almost surely. Moreover,  applying Lemma 1 with $U_n := \log_+ Y_n$ gives $Y_n \geq ( \lambda + \varepsilon)^n$ for infinitely many $n \geq 1$ almost surely. This implies $\tau \leq \tau' < \infty$ almost surely.\\

\noindent $\neg$(ii)$\Longrightarrow \neg$(i). By truncating the offspring distribution and working with stochastic dominance, we can assume that the number of children of each individual is a.s. bounded and $\sigma^2 := \textnormal{Var}[\xi] \in [0,\infty)$.

Fix a $\varepsilon > 0$ with $\lambda_1 := \lambda - 2 \varepsilon > 1$ and let $\lambda_0 := \lambda - \varepsilon $. Then, for all $n \geq 1$, consider the following events
\begin{align*}
A_n := \left\{ \sum\limits_{j=1}^{\lfloor \lambda_0^n \rfloor} \xi_{n+1,j} \geq  \left( \lambda - \frac{\varepsilon}{2} \right) \left\lfloor \lambda_0^n \right\rfloor \right\}, \quad \quad \quad B_n := \big\{ Y_n \leq \lambda_1^n \big\}.
\end{align*}
For all $n \geq 1$ we find by Chebyshev's inequality
\begin{align*}
P[ A_n^c] &\leq P \left[ \Bigg\vert \sum\limits_{j=1}^{\lfloor \lambda_0^n \rfloor} \xi_{1,j} - \lambda \lfloor \lambda_0^n \rfloor \Bigg\vert > \frac{\varepsilon}{2} \lfloor \lambda_0^n \rfloor \right] \leq \left( \frac{\varepsilon}{2} \right)^{-2} \frac{\sigma^2}{\lfloor \lambda_0^n \rfloor}.
\end{align*} 
Since $\lambda_0 > 1$ we can apply the Borel-Cantelli lemma to conclude that almost surely only finitely many events $A_n^c$, $n \geq 1$, do occur. On the other hand, by Lemma 1, we know that almost surely all but finitely many events $B_n$, $n \geq 1$, do occur. Also note that $(A_n)_{n \geq 1}$ is a sequence of independent events, and so is $(B_n)_{n \geq 1}$. Clearly $P[A_n] > 0$ for all $n \geq 1$ and there exists $N \in \mathbb{N}$ with $P[B_n] > 0$ for all $n \geq N$. All in all, we therefore can fix a $n_0 \in \mathbb{N}$ such that
\begin{align}
\left( \lambda - \frac{\varepsilon}{2} \right) \left\lfloor \lambda_0^n \right\rfloor - \lambda_1^n &\geq \left\lfloor \lambda_0^{n+1} \right\rfloor \quad \textnormal{for~all~} n \geq n_0,\\
\min \left( P[A],P[B] \right) &> \frac{1}{2}, \quad \textnormal{where}~ A := \bigcap_{n \geq n_0} A_n, \quad B := \bigcap_{n \geq n_0} B_n.
\end{align}
Then, by using (8), we find
\[ P [ A \cap B] = P[ A] + P [ B] - P [A \cup B] \geq P[ A] + P [B] - 1 > 0.  \]
Finally, by recalling hypothesis (H), we may increase the value of the initial state $Z_0 = k$ to ensure that
\[
P[ C] > 0, \quad \quad \quad \textnormal{where}~~C := \left\{ Z_{n_0} \geq \left\lfloor \lambda_0^{n_0} \right\rfloor \right\}.
\]
By inserting our construction of the events $A$ and $B$ and using (7), an inductive argument yields $Z_n \geq \lfloor \lambda_0^n \rfloor$ for all $n \geq n_0$ on the event $A \cap B \cap C$. Since $A \cap B$ and $C$ are independent events by definition,
\[ P [ \tau = \infty] \geq P [ A \cap B \cap C] = P[A \cap B]~P[C] > 0.  \qedhere \]
\end{proof}~

In fact, a careful look at the second part of this proof reveals the following result, which we need for the proof of the Kesten-Stigum theorem.

\begin{proposition}
Assume $E[ \log_+ Y] < \infty$. Then $q_k \rightarrow 0$ for $k \rightarrow \infty$.
\end{proposition}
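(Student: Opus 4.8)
The plan is to reread the second part of the proof of Theorem~1 (the implication $\neg$(ii)$\Longrightarrow\neg$(i)) and observe that it actually yields a lower bound on the survival probability that can be pushed arbitrarily close to $1$ by enlarging both the cut-off generation $n_0$ and the initial state $k$. Since $q_k = 1 - P[\tau = \infty \mid Z_0 = k]$, this is precisely what we need. As in that proof, I would first truncate the offspring law at a high level, so that the number of children is bounded, $\sigma^2 := \mathrm{Var}[\xi] < \infty$, and the truncated mean still exceeds $1$. Truncating the offspring can only decrease the process and hence only increase $q_k$, so an upper bound for the truncated chain is also an upper bound for the original $q_k$.

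I would keep the events $A = \bigcap_{n \geq n_0} A_n$, $B = \bigcap_{n \geq n_0} B_n$ and $C := \{ Z_{n_0} \geq \lfloor \lambda_0^{n_0}\rfloor\}$ from that proof, together with the inductive fact that on $A \cap B \cap C$ one has $Z_n \geq \lfloor \lambda_0^n \rfloor$ for all $n \geq n_0$, so that $A \cap B \cap C \subseteq \{\tau = \infty\}$. The new ingredient is the elementary union bound
\[
P[A \cap B \cap C] \geq 1 - P[A^c] - P[B^c] - P[C^c],
\]
which replaces the independence identity used for Theorem~1 and makes the three contributions additive. The Chebyshev estimate from that proof gives $P[A^c] \leq \sum_{n \geq n_0} P[A_n^c]$, whose summand decays geometrically in $n$ because $\lambda_0 > 1$, while the equivalence $E[\log_+ Y]<\infty \Leftrightarrow \sum_n P[\log_+ Y > cn]<\infty$ underlying Lemma~1 gives $P[B^c] \leq \sum_{n \geq n_0} P[\log_+ Y > n \log \lambda_1]$, a convergent tail since $\lambda_1 > 1$. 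Hence, given $\delta > 0$, I can fix $n_0$ (also large enough for inequality (7) to hold) so that $P[A^c] + P[B^c] < \delta/2$.

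With $n_0$ now frozen, the final step is to make $P[C^c] = P[Z_{n_0} < \lfloor \lambda_0^{n_0}\rfloor \mid Z_0 = k]$ small for all large $k$. Here I would use the monotone coupling of the chains started from different initial sizes: driving them with the same offspring and emigration variables, the map $z \mapsto (\sum_{j=1}^z \xi_j - Y)_+$ is non-decreasing, so $Z_{n_0}$ is non-decreasing in $k$ and therefore $P[C]$ is non-decreasing in $k$. Moreover, since $P[\xi \geq 1] > 0$ the coupled value $Z_{n_0}$ tends to $\infty$ almost surely as $k \to \infty$ (first $Z_1 \to \infty$, then $Z_2 \to \infty$, and so on through the fixed number $n_0$ of steps), so by dominated convergence $P[C] \to 1$. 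Thus there is $k_0$ with $P[C^c] < \delta/2$ for all $k \geq k_0$, giving $P[\tau = \infty \mid Z_0 = k] \geq 1 - \delta$ and $q_k \leq \delta$ for every $k \geq k_0$. As $\delta$ was arbitrary, $q_k \to 0$.

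The step I expect to be the crux is this uniform control of $P[C]$: guaranteeing that a large initial population reaches the threshold $\lfloor \lambda_0^{n_0}\rfloor$ at the fixed generation $n_0$ with probability close to $1$, and doing so \emph{monotonically} in $k$ so that the estimate persists for all larger initial states rather than a single one. The monotone coupling is exactly what secures both the convergence $P[C]\to 1$ and the monotonicity in $k$; everything else is a rereading of the Theorem~1 argument with the union bound replacing the independence computation.
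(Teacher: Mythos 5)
Your proposal is correct and is exactly the argument the paper has in mind when it says the Proposition follows from ``a careful look at the second part'' of the proof of Theorem~1: you reuse the events $A$, $B$, $C$ and the inductive lower bound $Z_n \geq \lfloor \lambda_0^n \rfloor$, replace the independence computation by a union bound, kill the tails $P[A^c]+P[B^c]$ by enlarging $n_0$, and drive $P[C^c]\to 0$ via the monotone coupling in $k$. All steps check out (in particular the truncation only increases $q_k$, and the summability of $\sum_n P[\log_+ Y > n\log\lambda_1]$ is precisely the hypothesis $E[\log_+ Y]<\infty$), so this is a complete proof along the paper's intended lines.
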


\noindent The proof of this Proposition is left to the reader.

\begin{proof}[Proof of Theorem 2]
(I). Since $\tau \leq \tau'$ it suffices to verify $E[\tau'] < \infty$. Fix $\varepsilon > 0$ and $r \in (0, \infty)$ according to the assumption and set 
\begin{align*}
T &:= \inf \{ n \geq 1~ \vert~ Z'_m \leq r (\lambda + \varepsilon)^{m-1} \textnormal{~for~all~} m \geq n \}, \\
\hat{T} &:= \inf \left\{ n > T ~\vert~Y_n > r (  \lambda + \varepsilon)^n \right\}.
\end{align*}
Then $\tau' \leq \hat{T}$ almost surely by construction and hence it suffices to prove $E[ \hat{T}] < \infty$. Note that for all $n \geq 1$ Markov's inequality gives
\begin{align}
P[ T= n] \leq P \left[ Z'_{n-1} > r \left( \lambda + \varepsilon \right)^{n-2} \right] \leq \frac{k}{r} \left( 1 + \frac{\varepsilon}{\lambda} \right)^{-n+2}.
\end{align}
Moreover, since $\left( (\xi_{n,j})_{j \geq 1}),Y_n \right)_{n \geq 1}$ is i.i.d.,  we know
\begin{align}
E[ \hat{T}] &= \sum\limits_{n \geq 1} E[ \hat{T}~\vert~ T=n|~ P[T=n] = \sum\limits_{n \geq 1} \left( E[T_n] + n \right) P[ T = n],
\end{align}
where
\[
T_n := \inf \left\{ m \geq 1~|~Y_m > r ( \lambda + \varepsilon)^{n+m} \right\}, \quad n \geq 1.
\]
For all $n \geq 1$ we have
\begin{align*}
\displaystyle E[T_n] &= 1 + \sum_{m \geq 1} P[T_n > m] = 1 + \sum\limits_{m \geq 1} \prod\limits_{l=1}^m P \left[ Y \leq r \left( \lambda + \varepsilon \right)^{n+l} \right]\\ 
&= 1 + \bigg( \sum\limits_{m \geq 1} \prod\limits_{l=1}^m P \big[ Y \leq r \left( \lambda + \varepsilon \right)^l \big] \bigg) \bigg( \prod\limits_{l=1}^n P \big[ Y \leq r  \left( \lambda + \varepsilon \right)^l \big] \bigg)^{-1}.
\end{align*}

Due to our choice of $r \in (0,\infty)$ we conclude that $E[T_n] \in (1,\infty)$ for all $n \geq 1$. Now, by inserting this formula for $E[T_n]$ into equation (10) and then applying inequality (9), we deduce that $E[ \hat{T}] < \infty$. \\

\noindent (II). First, note that by possibly increasing $r \in (0,\infty)$ we can guarantee that there exists $n_0 \geq 1$ satisfying both $r n_0^{-\theta} < 1$ and
\[ P \left[ Y \leq \kappa r \right] > 0, \quad \textnormal{where}~~ \kappa := \prod\limits_{n \geq n_0} \left( 1 - r n^{-\theta} \right) \in (0,1).   \]
Fix $r \in (0,\infty)$, $n_0 \geq 1$ and $\kappa \in (0,1)$ accordingly. By possibly increasing $n_0$ we may further assume
\begin{align}
\sum\limits_{n \geq n_0} \prod\limits_{l=n_0}^n P \left[ Y_1 \leq \kappa r \lambda^l l^{-\theta} \right] = \infty.
\end{align}
Fix $\eta$ with $(1+\delta)^{-1} < \eta < 1$. Then, for all $n \geq n_0$, let 
\begin{align*}
N_n &:= \lambda^n  \left( 1 +  \frac{1}{n} \right) \prod\limits_{l=n_0}^n \left( 1 - r l^{-\theta} \right), \quad f_n := \kappa \lambda^{\eta n} , \quad g_n:= \kappa r n^{- \theta} \lambda^n.
\end{align*}
By possibly increasing $n_0 \in \mathbb{N}$ and recalling $\eta < 1$ we find for all $n \geq n_0$
\begin{align*}
\lambda \lfloor N_n \rfloor - \lceil f_n \rceil &\geq \lambda^{n+1} \left( 1 + \frac{1}{n} \right) \prod\limits_{l=n_0}^n \left( 1 - r l^{- \theta} \right) - \kappa \lambda^{\eta n} n - 2\\
&\geq \lambda^{n+1} \left( 1 + \frac{1}{n} \right) \prod\limits_{l=n_0}^n \left( 1 - r l^{- \theta} \right) - \lambda^{\eta n} n^2 \prod\limits_{l=n_0}^n \left( 1 - r l^{-\theta} \right)\\
&= \left( \lambda^{n+1} \left( 1 + \frac{1}{n} \right) - \lambda^{\eta n}  n^2 \right)  \prod\limits_{l=n_0}^n \left( 1 - r l^{-\theta} \right)\\
&=  \left( \lambda^{n+1} + \frac{\lambda^{n+1}}{n+1}  + \frac{\lambda^{n+1}}{n(n+1)} - \lambda^{\eta n} n^2 \right)  \prod\limits_{l=n_0}^n \left( 1 - r l^{-\theta} \right) \\
&\geq \lambda^{n+1} \left( 1 + \frac{1}{n+1} \right) \prod\limits_{l=n_0}^n  \left( 1 - r l^{-\theta} \right).
\end{align*}

\noindent Also, note that by definition of $\kappa$ we know for all $n \geq n_0$
\[
\lceil g_{n+1} \rceil \leq  g_{n+1} + 1 \leq \lambda^{n+1} \left( 1 + \frac{1}{n+1} \right) \left( \prod\limits_{l=n_0}^n \left( 1 - r l^{-\theta} \right) \right) r (n+1)^{-\theta}.
\]
By combining the previous two estimates, we directly find for all $n \geq n_0$
\begin{align}
\lambda \lfloor N_n \rfloor - \lceil f_n \rceil - \lceil g_{n+1} \rceil \geq N_{n+1}.
\end{align}
For all $n \geq n_0$ we consider the event
\[
D_n := \bigg\{ \sum\limits_{j=1}^{\lfloor N_n \rfloor} \xi_{n,j} \geq \lambda \lfloor N_n \rfloor - \lceil f_n \rceil \bigg\}.
\]
Then, by applying Lemma 2, we know that there exists $c \in (0,\infty)$ with
\begin{align*}
P[ D_n^c] \leq P \Bigg[ \bigg\vert \sum\limits_{j=1}^{\lfloor N_n \rfloor} \xi_{1,j} - \lambda \lfloor N_n \rfloor \bigg\vert > \lceil f_n \rceil \Bigg] \leq \frac{2 c \lfloor N_n \rfloor }{\lfloor f_n \rfloor^{1+\delta}}, \quad \textnormal{for~all~} n \geq n_0.
\end{align*}
Recalling our definition of $N_n$, $f_n$, and $\eta$, and noticing that the events $(D_n)_{n \geq n_0}$ are independent, we obtain
\[
P[ D] > 0, \quad \textnormal{where} \quad D := \bigcap\limits_{n \geq n_0} D_n.
\]
Consider the stopping time
\[
T := \inf \left\{ n > n_0~ \vert~ Y_n > g_n  \right\}.
\]
Then, by (11),
\begin{align*}
E[T] =  \sum\limits_{n \geq 0} P[ T > n] = n_0 +1 + \sum\limits_{n \geq n_0} \prod\limits_{l=n_0 + 1}^n P \left[ Y_1 \leq \kappa r l^{-\theta} \lambda^l \right] = \infty.
\end{align*}
Finally, by using (H), we may assume that the initial state $Z_0 = k \geq 1$ is chosen big enough such that
\[
P[C] > 0, \quad \quad \textnormal{where} \quad C := \{ Z_{n_0} \geq N_{n_0} \}.
\]
Note that by construction $C$ and $D$ are independent events. All in all, by (12) we can deduce that $\tau \geq T$ on the event $ B:= C \cap D$, which occurs with a positive probability. Finally, by (IND),
\[
E[ \tau ] \geq E[ \tau 1_B] \geq E[ T 1_B] = E[ T~ \vert ~ B]~P[B]  = E[ T]~P[ B] = \infty. \qedhere
\]
\end{proof}

\section{Proof of Theorem 3}

\noindent For convenience, we split the proof of Theorem 3 into smaller parts by formulating and separately proving the following two lemmas.

\begin{lemma}
Assume \textnormal{(REG)}. Then,
\[
C := \limsup\limits_{k \rightarrow \infty} ~q_k~P[Y>k]^{-1} \leq \frac{\lambda ^{-\alpha}}{1- \lambda^{-\alpha}}.
\]
\end{lemma}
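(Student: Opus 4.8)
The plan is to derive a self-referential inequality for the constant $C$ itself and then solve it. Writing $\bar{F}(t):=P[Y>t]$, I want to show that the quantity $C=\limsup_{k\to\infty}q_k\,\bar{F}(k)^{-1}$ satisfies $C\le\lambda^{-\alpha}(1+C)$; since $\lambda^{-\alpha}<1$, solving this inequality gives $C\le\lambda^{-\alpha}/(1-\lambda^{-\alpha})$, provided $C$ is already known to be finite. The engine is the one-step Markov decomposition $q_k=E_k[q_{Z_1}]$, valid with the convention $q_0=1$ since $0$ is absorbing, combined with the monotonicity $q_1\ge q_2\ge\cdots$. The latter follows from the obvious coupling: running two copies with initial sizes $k_1<k_2$ on the same $(\xi_{n,j})$ and $(Y_n)$ keeps $Z_n^{(1)}\le Z_n^{(2)}$ for all $n$, hence $\tau^{(1)}\le\tau^{(2)}$, since $\sum_{j=1}^{Z_n^{(1)}}\xi_{n+1,j}\le\sum_{j=1}^{Z_n^{(2)}}\xi_{n+1,j}$ and subtracting the same $Y_{n+1}$ preserves order.

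Two analytic tools enter. First, because $\xi\ge 0$, the Laplace transform $E[e^{-s\xi}]$ is finite for every $s\ge 0$, so a Chernoff bound gives, for each $\epsilon>0$, a constant $c_\epsilon>0$ with $P[S_k<(1-\epsilon)\lambda k]\le e^{-c_\epsilon k}$, where $S_k:=\sum_{j=1}^{k}\xi_{1,j}$. This needs no moment assumption on $\xi$ beyond $E[\xi]=\lambda$, which is exactly why Lemma 3 asks only for (REG). It lets me replace the random branching output $S_k$ by the deterministic level $(1-\epsilon)\lambda k$ at a cost that is $o(\bar{F}(k))$, and it simultaneously decouples $S_k$ from $Y_1$, so the possible dependence between $(\xi_{1,j})_j$ and $Y_1$ never has to be confronted. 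Second, (REG) supplies the ratio limits $\bar{F}(ct)/\bar{F}(t)\to c^{-\alpha}$, converting every probability of the form $P[Y>c\lambda k]$ into $c^{-\alpha}\lambda^{-\alpha}\bar{F}(k)(1+o(1))$.

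The heart of the argument is a bucketing of $q_k=E_k[q_{Z_1}]$ at a small threshold $\epsilon'\lambda k$. On $\{Z_1<\epsilon'\lambda k\}$ I simply bound $q_{Z_1}\le 1$; on the good event $S_k\ge(1-\epsilon')\lambda k$ this forces $Y_1>(1-2\epsilon')\lambda k$, so this bucket contributes at most $P[Y>(1-2\epsilon')\lambda k]+e^{-c_{\epsilon'}k}$, whose limsup ratio tends to $\lambda^{-\alpha}$ as $\epsilon'\to 0$; this is the ``first emigration already decisive'' term. On $\{Z_1\ge\epsilon'\lambda k\}$ I instead use $q_{Z_1}\le(C+\eta)\bar{F}(Z_1)$, legitimate for large $k$ because then $Z_1$ exceeds any fixed level, together with $\bar{F}(Z_1)\le\bar{F}(((1-\epsilon')\lambda k-Y_1)_+)$ on the good event. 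The point is that restricting to $Z_1\ge\epsilon'\lambda k$ excludes precisely the range of large $Y_1$ that would make $\bar{F}(Z_1)$ of order one, so what survives is the ``single big jump in a \emph{future} emigration'' contribution, giving $\limsup\le(C+\eta)\lambda^{-\alpha}$. Summing the two buckets and letting $\eta,\epsilon'\to 0$ yields $C\le\lambda^{-\alpha}(1+C)$.

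I expect the delicate point to be exactly this second bucket: verifying that the ``$Y_1$ and a later emigration \emph{both} moderately large'' scenarios are negligible, of order $\bar{F}(k)^2=o(\bar{F}(k))$, so that the threshold $\epsilon'\lambda k$ genuinely separates the immediate-death mass ($\lambda^{-\alpha}$) from the future-death mass ($C\lambda^{-\alpha}$) without the spurious factor $2$ that a naive convolution bound $E[\bar{F}((b-Y_1)_+)]\le P[Y+Y'>b]\sim 2\bar{F}(b)$ would produce. Finally, manipulating $C\le\lambda^{-\alpha}(1+C)$ presupposes $C<\infty$, which I would establish beforehand by a cruder path comparison in the spirit of the proof of Theorem 1: bounding extinction by the event that a deterministic growing barrier $c_0\lambda^{l}k$ is ever breached, whose probability is at most a summable series $\sum_l c_0^{-\alpha}\lambda^{-\alpha l}\bar{F}(k)(1+o(1))$ plus exponentially small branching-deviation terms, giving $q_k\le\mathrm{const}\cdot\bar{F}(k)$ outright.
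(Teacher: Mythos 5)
Your proposal is correct and follows essentially the same route as the paper: first establish $C<\infty$ by forcing the branching part above a deterministic geometric barrier up to exponentially small errors, then derive the bootstrap inequality $C\le\lambda^{-\alpha}(1+C)$ from a one-step decomposition splitting according to whether $Y_1$ is of order $\lambda k$ (immediate-death term, giving $\lambda^{-\alpha}$), small (future-death term, giving $C\lambda^{-\alpha}$ via $q_{Z_1}\le(C+\eta)P[Y>Z_1]$), or intermediate (a cross term of order $P[Y>k]^2$) — this matches the paper's three summands exactly, and your lower-tail Chernoff bound for nonnegative $\xi$ is a clean substitute for the paper's truncation-plus-domination step. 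The one place where your sketch asserts more than your tools deliver is the finiteness step: the bound $\sum_{l}P[Y>kb^{l}]\le \mathrm{const}\cdot P[Y>k]$ does not follow from applying (REG) termwise, since the ratio convergence is not uniform over the infinitely many scales $b^{l}$; you need Potter's bounds there, or, as the paper does, Grincevi\v{c}ius's theorem applied to the tail event $\{\sum_{l}Y_{l}b^{-l}>k\}$ which contains the union.
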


\begin{lemma}
Assume \textnormal{(REG)} and that all exponential moments of $\xi$ are finite. Moreover, let $N \in \mathbb{Z}_{\geq 2} \cup \{ \infty \}$. Then,
\begin{align*}
\liminf\limits_{k \rightarrow \infty} P [ \tau < N ~ \vert ~ Z_0 = k]~P[Y>k]^{-1} &\geq \sum\limits_{l=1}^{N-1} \lambda^{-\alpha l}.
\end{align*}
\end{lemma}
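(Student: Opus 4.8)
The plan is to bound $P[\tau < N \mid Z_0 = k]$ from below by isolating, for each generation $l \in \{1,\dots,N-1\}$, the contribution of the event $\{\tau = l\}$ and showing that its probability is asymptotically at least $\lambda^{-\alpha l} P[Y>k]$. Since the events $\{\tau = l\}$, $l \geq 1$, are pairwise disjoint,
\[
P[\tau < N \mid Z_0 = k] = \sum_{l=1}^{N-1} P[\tau = l \mid Z_0 = k],
\]
and as $\liminf$ is superadditive over finitely many summands, the case of finite $N$ reduces to proving
\[
\liminf_{k \to \infty} P[\tau = l \mid Z_0 = k]\, P[Y > k]^{-1} \geq \lambda^{-\alpha l}
\]
for each fixed $l \geq 1$. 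The case $N = \infty$ then follows by applying the finite-$N$ bound with an arbitrary $N' < \infty$, using $\{\tau < N'\} \subseteq \{\tau < \infty\}$, and letting $N' \to \infty$.

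To estimate $P[\tau = l]$, fix $\varepsilon > 0$, set $b = b(k) := \lceil \lambda^{l-1} k (1+\varepsilon) \rceil$, and consider the good-growth event $G_{l-1}$ on which the process survives the first $l-1$ generations and $Z_{l-1} \leq b$. On $G_{l-1}$ extinction at step $l$ means $\sum_{j=1}^{Z_{l-1}} \xi_{l,j} \leq Y_l$. Since $\big((\xi_{l,j})_j, Y_l\big)$ is independent of the past and $m \mapsto h(m) := P\big[\sum_{j=1}^m \xi_j \leq Y\big]$ is non-increasing (because $\xi \geq 0$), conditioning on the natural filtration and using $Z_{l-1} \leq b$ on $G_{l-1}$ gives
\[
P[\tau = l] \geq P[G_{l-1}]\, h(b).
\]
I stress that this step needs neither \textnormal{(IND)} nor any independence of $\xi$ and $Y$ within a generation: the dependence is entirely absorbed into $h$.

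Two ingredients then remain. First, $P[G_{l-1}] \to 1$ as $k \to \infty$: because $Y$ is a.s.\ finite, the emigrations $Y_1,\dots,Y_{l-1}$ are negligible against a population of order $\lambda^m k$, and an iterated concentration argument shows $Z_m \asymp \lambda^m k$ with probability tending to one over the finitely many generations $m \leq l-1$. Second, writing $s = s(k) := \lambda b (1+\varepsilon)$, the inclusion $\{Y \geq s\} \cap \{\sum_{j=1}^b \xi_j \leq s\} \subseteq \{\sum_{j=1}^b \xi_j \leq Y\}$ yields
\[
h(b) \geq P[Y \geq s] - P\!\left[ \sum_{j=1}^b \xi_j > s \right].
\]
Here the exponential-moment hypothesis enters decisively: a Chernoff bound makes $P\big[\sum_{j=1}^b \xi_j > s\big] \leq e^{-cb}$ exponentially small in $b \asymp \lambda^{l-1}k$, hence $o(P[Y>k])$ for every index $\alpha$, whereas the merely polynomial concentration of Lemma 2 would fail to beat $P[Y>k]$ for large $\alpha$. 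Meanwhile $s(k) \sim \lambda^l k (1+\varepsilon)^2$, so \textnormal{(REG)} together with slow variation gives $P[Y \geq s]\, P[Y>k]^{-1} \to \big(\lambda^l (1+\varepsilon)^2\big)^{-\alpha}$. Combining the three displays gives $\liminf_k P[\tau = l]\, P[Y>k]^{-1} \geq \lambda^{-\alpha l}(1+\varepsilon)^{-2\alpha}$, and letting $\varepsilon \to 0$ closes the per-generation bound.

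I expect the main obstacle to be the iterated concentration establishing $P[G_{l-1}] \to 1$, which requires carefully propagating two-sided control of $Z_m$ around $\lambda^m k$ through the $l-1$ branching-and-emigration steps while accumulating the multiplicative errors; once this is in place, the regular-variation limit and the Chernoff estimate are routine.
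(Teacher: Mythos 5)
Your argument is correct, and it rests on the same two pillars as the paper's proof of this lemma: a Cram\'er--Chernoff upper bound on the offspring sums (exactly where the exponential-moment hypothesis enters, for the reason you identify --- the error must be $o(P[Y>k])$ for \emph{every} index $\alpha$), and the regular-variation asymptotics $P[Y>ck]\,P[Y>k]^{-1}\to c^{-\alpha}$ applied to a single heavy emigration at generation $l$, followed by $\varepsilon\searrow 0$ and the reduction of $N=\infty$ to finite $N$ by monotonicity. The bookkeeping differs. The paper introduces one trajectory event $A_k=\bigcap_{l=0}^{N-1}A_{k,l}$ forcing $\sum_{j=1}^{\lceil k(\lambda+\varepsilon)^l\rceil}\xi_{l+1,j}\le k(\lambda+\varepsilon)^{l+1}$ for all $l$ simultaneously, notes that on $A_k$ the union $\bigcup_{l=1}^{N-1}\{Y_l\ge k(\lambda+\varepsilon)^l\}$ forces $\tau<N$, and extracts the sum via inclusion--exclusion, the cross terms being $O\big(P[Y>k]^2\big)$. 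You instead decompose by the exact extinction time $\{\tau=l\}$ and use disjointness of these events, which costs you the extra ingredient $P[G_{l-1}]\to 1$. That ingredient is available and less delicate than you fear: survival of the first $l-1$ generations with probability tending to one follows from Proposition 1 (applicable because \textnormal{(REG)} forces $E[\log_+Y]<\infty$), and the one-sided bound $Z_{l-1}\le Z'_{l-1}\le \lambda^{l-1}k(1+\varepsilon)$ with probability tending to one follows from the law of large numbers applied to the $k$ independent subtrees of the migration-free process --- no two-sided iterated concentration is required, since only an upper bound on $Z_{l-1}$ feeds into $h(b)$. Your device of absorbing any within-generation dependence of $\xi$ and $Y$ into the monotone function $h(m)=P[\sum_{j=1}^m\xi_j\le Y]$ is a clean way to avoid \textnormal{(IND)}, paralleling the paper's implicit use of $P[\{\exists l\}\cap A_k]\ge P[\exists l]-P[A_k^c]$.
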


\begin{proof}[Proof of Lemma 3]
By truncating the distribution of $\xi$ and working with stochastic dominance, we may assume that $\xi$ is almost surely bounded and particularly has finite exponential moments.\\

In the first step we will verify $C < \infty$. For this purpose fix $\varepsilon > 0$ such that $\lambda_0 := \lambda -  2 \varepsilon > 1$ and let $\lambda_1 := \lambda - \varepsilon$. Then, by applying Lemma 6 from Appendix 2, there are $c_1,\ldots,c_{N} \in (0,\infty)$ such that the sequence $(x_n)_{n \geq 0}$ defined by $x_0 := 1$,

\[ x_{n+1} := \left\{ \begin{array}{l l}
\lambda_1 x_n - \lambda_0^{n}, &n \geq N\\ \\
\lambda_1 x_n - c_{n+1},& n \leq N - 1,
\end{array} \right.  \]~

\noindent is strictly positive and satisfies $x_n \geq c^n$ for a $c > 1$ and all $n \geq 1$. Furthermore, for all $k \geq 1$ we consider the events
\[
A_{k,n} := \left\{ \sum\limits_{j=1}^{k x_n} \xi_{n+1,j} \geq \lambda_1 k x_n \right\}, \quad n \geq 0, \quad \quad \quad A_k := \bigcap\limits_{n \geq 0} A_{k,n}.
\]
For all $k \geq 1$ we have
\begin{align}
q_k  = P[\tau < \infty,~A_k~\vert~Z_0 = k]  + P \big[ \tau < \infty,~A_k^c~\vert~ Z_0 =k \big],
\end{align}
as well as
\[
P \big[ \tau < \infty,~A_k^c~\vert~Z_0 = k \big] \leq \sum\limits_{n \geq 0} P[A_{k,n}^c],
\]
and hence, by using the Cram\'er-Chernoff method and our knowledge on $(x_n)_{n \geq 0}$, we find that $P[ \tau < \infty,~A_k^c ~\vert~ Z_0 = k] \rightarrow 0$ for $k \rightarrow \infty$ exponentially fast. Therefore, by applying (REG) we deduce
\[
\lim\limits_{k \rightarrow \infty}~P \big[ \tau < \infty,~A_k^c~\vert~Z_0 = k \big]~P[Y>k]^{-1} = 0,
\]
and by recalling (13) we further conclude
\begin{align*}
C = \limsup\limits_{k \rightarrow \infty} P[ \tau < \infty,~A_k~\vert~ Z_0 = k]~P[Y>k]^{-1} .
\end{align*}
Fix $k \geq 1$ and let $Z_0 = k$. Then, by construction of $A_k$ and $(x_n)_{n \geq 0}$,
\[
\{ \tau < \infty \} \cap A_k \subseteq \bigcup\limits_{n=1}^{N -1} \{ Y_n > k c_{n+1}  \} ~\cup  \bigcup\limits_{n \geq N} \{ Y_n > k \lambda_0^n \},
\]
and therefore
\begin{align}
C \leq \sum\limits_{n=1}^{N-1} P[ Y _n > c_{n+1} k] + P \left[ \sum\limits_{n \geq N} Y_n \lambda_{0}^{-n} > k \right].
\end{align}
Note that, on the one hand due to (REG),
\begin{align*}
\lim\limits_{k \rightarrow \infty} \sum\limits_{n=1}^{N-1} P[Y > c_{n} k]~P[Y>k]^{-1} = \sum\limits_{n=0}^N c_{n+1}^{-\alpha}  < \infty,
\end{align*}
and on the other hand, by Theorem 5,
\[
\limsup\limits_{k \rightarrow \infty} P \bigg[ \sum\limits_{n \geq N} Y_n \lambda_{0}^{-n} > k \bigg]~P[Y>k]^{-1} < \infty.
\]
All in all, by (14) we conclude that $C < \infty$.\\

In the second step we fix $\varepsilon > 0$ with $\lambda - \varepsilon > 1$ and $0 < \delta < \lambda-\varepsilon$. Then, for all $k \geq 1$,
\begin{align*}
q_k  &\leq P[ Y_1 >  k(\lambda - \varepsilon)] + P[\tau < \infty,~Y_1 \leq \delta k~\vert~ Z_0 = k]\\
& \quad + P[\tau < \infty, ~\delta k \leq Y_1 \leq k (\lambda - \varepsilon)~\vert~ Z_0 = k].
\end{align*}
Recall that $(Z_n)_{n \geq 0}$ is a time-homogeneous Markov chain, which is monotone with respect to the initial state. Hence we obtain
\begin{align*}
C &= \limsup\limits_{k \rightarrow \infty} P[ Y>k]^{-1}~q_k \\
&\leq \limsup\limits_{k \rightarrow \infty} P[Y>k]^{-1} P[ Y > k (\lambda - \varepsilon)] + \limsup\limits_{k \rightarrow \infty}  P[Y>k]^{-1} q_{(\lambda - \varepsilon - \delta)k}\\
& \quad  + \limsup\limits_{k \rightarrow \infty} P[Y>k]^{-1} P[ Y_1 \geq \delta k]~ q_{(\varepsilon/2) k},
\end{align*}
where we use the notation $q_r := q_{\lfloor r \rfloor}$ for $r \in \mathbb{R}$. Now, by applying (REG) and $C < \infty$, we conclude for the three summands separately
\begin{align*}
\lim\limits_{k \rightarrow \infty} P[Y>k]^{-1} P[Y>k ( \lambda - \varepsilon)] &= (\lambda - \varepsilon)^{-\alpha},\\
\limsup\limits_{k \rightarrow \infty}~ P[Y>k]^{-1} q_{(\lambda - \varepsilon - \delta)k} &\leq C (\lambda - \varepsilon - \delta)^{-\alpha},\\
\lim\limits_{k \rightarrow \infty} P[Y \geq \delta k]~q_{(\varepsilon/2) k}~P[Y>k]^{-1} &= 0.
\end{align*}

\noindent Hence
\[
C \leq (\lambda - \varepsilon)^{-\alpha} + C ( \lambda - \varepsilon - \delta)^{-\alpha},
\]
and the claim follows by letting $\delta \searrow 0$ and $\varepsilon \searrow 0$.
\end{proof}

\begin{proof}[Proof of Lemma 4]
Due to monotonicity it suffices to prove the claim for $2 \leq N < \infty$. Fix $\varepsilon > 0$.  For all $k \geq 1$ and $l=0,\ldots,N-1$ define
\[
A_{k,l} := \left\{ \sum\limits_{j=1}^{\lceil k (\lambda + \varepsilon)^l \rceil} \xi_{l+1,j} \leq k (\lambda + \varepsilon)^{l+1} \right\}, \quad A_k := \bigcap\limits_{l=0}^{N-1} A_{k,l}.
\]
Then, for all $l=0,\ldots,N-1$, $P[A_{k,l}^c] \rightarrow 0$ for $k \rightarrow \infty$ exponentially fast due to the Cram\'er-Chernoff method. Hence, by (REG),
\begin{align*}
L_N^- := &\liminf\limits_{k \rightarrow \infty} P[ \tau < N~|~Z_0 = k]~P[Y>k]^{-1} \\
= &\liminf\limits_{k \rightarrow \infty} P[ \tau < N,~ A_k~|~ Z_0 = k]~P[Y > k]^{-1},
\end{align*}
and by definition of $A_k$ further
\[ L_N^- \geq \limsup\limits_{k \rightarrow \infty} P \left[ \exists l \in \{ 1,\ldots,N-1 \}: Y_l \geq k ( \lambda + \varepsilon)^l \right] ~P[Y > k]^{-1}. \]
Now, by applying the inclusion-exclusion principle, recalling that the sequence $(Y_m)_{m \geq 1}$ is i.i.d.\ and working with (REG), we obtain
\[
L_N^- \geq \sum\limits_{l=1}^{N-1} \lim\limits_{k \rightarrow \infty} P \left[ Y_1 \geq k ( \lambda + \varepsilon)^l \right] P[Y>k]^{-1} = \sum\limits_{l=1}^{N-1} ( \lambda + \varepsilon)^{-\alpha l}.
\]
The claim now follows by letting $\varepsilon \searrow 0$.
\end{proof}

\begin{proof}[Proof of Theorem 3]
In view of Lemma 3 and Lemma 4, it suffices to prove that for fixed $2 \leq N < \infty$
\[
L_N^+ := \limsup\limits_{k \rightarrow \infty} P[\tau < N~\vert~Z_0=k] \leq \sum\limits_{l=1}^{N-1} \lambda^{-\alpha l}.
\]
By the same arguments as in the proof of Lemma 3 we may assume that all exponential moments of $\xi$ are finite. We will verify the claim by showing that for all $\varepsilon_1 > 0$ satisfying $\lambda_0 := \lambda - 2 \varepsilon_1 > 1$ we have
\begin{align}
L_N^+ \leq \sum\limits_{l=1}^{N-1} \lambda_0^{-\alpha l}.
\end{align}
Let $\lambda_1 := \lambda - \varepsilon_1$. For all $k \geq 1$ and $l=1,\ldots,N$ we define the events
\[
B_{k,l} := \left\{ \sum\limits_{j=1}^{\lfloor k \lambda_1^l \rfloor} \xi_{l+1,j} \geq k \left(\lambda - \frac{\varepsilon_1}{2} \right) \lambda_1^l \right\}, \quad B_k := \bigcap\limits_{l=1}^{N-1} B_{k,l}.
\]
For all $l=1,\ldots,N-1$, the Cram\'er-Chernoff method implies that $P[B_{k,l}^c] \rightarrow 0$ for $k \rightarrow \infty$ exponentially fast, and hence
\[ 
L_N^+ = \limsup\limits_{k \rightarrow \infty} P[ \tau < N,~B_k~\vert~Z_0 = k]~P[Y>k]^{-1}.
\]
Consider the event
\[
C_k := \left\{ \exists l \in \{ 1,\ldots,N-1 \} : Y_l > k \lambda_0^l  \right\}
\]

\noindent Then, by (REG),
\begin{align*}
&\limsup\limits_{k \rightarrow \infty} P[C_k]~P[Y>k]^{-1} \\
\leq &\limsup\limits_{k \rightarrow \infty} \sum\limits_{l=1}^{N-1} P[ Y_1 > k \lambda_0^l ]~P[Y>k]^{-1} =   \sum\limits_{l=1}^{N-1} \left( \lambda - \varepsilon_1 \right)^{-\alpha l},
\end{align*}
and hence, in order to obtain the inequality (15), it suffices to show
\begin{align}
\lim\limits_{k \rightarrow \infty} P[ \tau < N,~B_k,~C_k^c~\vert~ Z_0 = k]~P[Y>k]^{-1} = 0.
\end{align}
Let $\varepsilon_2 \in (0,1)$ and introduce for all $k \geq 1$ the random variables
\begin{align*}
 R_k &:= \# \left\{ l=1,\ldots,N-1~\vert~ Y_l \geq \varepsilon_2 k \lambda_0^l  \right\}, \\
 T_k &:= \inf \left\{ l \geq 1 ~\vert~ Y_l \geq \varepsilon_2 k \lambda_0^l \right\}.
\end{align*}
Then, since (REG) holds and $(Y_m)_{m \geq 1}$ is i.i.d., we easily obtain
\begin{align}
\limsup\limits_{k \rightarrow \infty} P[ \tau < N,~B_k,~C_k^c,~R_k \geq 2~\vert~ Z_0 = k]~P[Y>k]^{-1} = 0.
\end{align}
On the other hand, if $\varepsilon_2$ is chosen small enough, then, by Lemma 6,
\begin{align}
P[ \tau < N,~B_k,~C_k^c,~R_k = 0~\vert~ Z_0 = k] = 0 \quad \textnormal{for~all~} k \geq 1.
\end{align}
Combining (17) and (18), in order to verify (16), we only need to show
\begin{align}
\limsup\limits_{k \rightarrow \infty} P[ \tau < N,~B_k,~C_k^c,~R_k =1~\vert~ Z_0 = k]~P[Y>k]^{-1} = 0.
\end{align}
Note that
\begin{align*}
&P[ \tau < N,~B_k,~C_k^c,~R_k =1 ~\vert~ Z_0 =k]\\
=& \sum\limits_{l=1}^{N-1} P[ \tau < N,~B_k,~C_k^c,~R_k =1,~T_k=l ~\vert~ Z_0 =k],
\end{align*}
If $\varepsilon_2 > 0$ is chosen small enough, inserting the definition of all of our random variables and applying Lemma 7 with $a:= \lambda_1 = \lambda - \varepsilon_1$ gives
\[
P[ \tau < N,~B_k,~C_k^c,~R_k =1,~T_k=l ~\vert~ Z_0 =k] = 0~~ \textnormal{for~}k~\textnormal{large~enough}.
\]
\end{proof}

\section{Proof of the Kesten-Stigum theorem}

Again, we will work with the branching process $(Z'_n)_{n \geq 0}$. However, in some parts of the proof we shall consider the more general case $Z'_0 = k' \geq 1$, when generally $Z_0 = k \neq k'$. The extinction probability of $(Z'_n)_{n \geq 0}$ given $Z'_0 = 1$ will be denoted by $q' \in [0,1)$. We also recall the existence of the almost sure martingale limit
\[
W' := \lim\limits_{n \rightarrow \infty} \lambda^{-n} Z'_n \in [0,\infty).
\]

\begin{lemma}[Decomposition]
Fix $k_0 > k$ with $P[Z_1 = k_0] > 0$ and $k':= k_0 - k$. Let $Z_1^{(1)} := k$, $Z_1^{(2)}:=k'$, and define for $n \geq 1$ recursively
\[
Z_{n+1}^{(1)} := \Bigg( \sum\limits_{j=1}^{Z_n^{(1)}} \xi_{n+1,j} - Y_{n+1} \Bigg)_+, \quad \quad Z_{n+1}^{(2)} := \sum\limits_{j=Z_n^{(1)}+1}^{Z_n^{(1)} + Z_n^{(2)}} \xi_{n+1,j},
\]
Then
\begin{align}
\big( Z_n^{(1)} \big)_{n \geq 1} \overset{\textnormal{d}}{=} (Z_n)_{n \geq 0}, \quad \quad \big( Z_n^{(2)} \big)_{n \geq 1} \overset{\textnormal{d}}{=} (Z'_n)_{n \geq 0},
\end{align}
and if \textnormal{(IND)} holds, then $(Z_n^{(1)})_{n \geq 0}$ and $(Z_n^{(2)})_{n \geq 0}$ are independent. Moreover, for fixed $n \geq 1$,
\begin{align}
Z_n = Z_n^{(1)} + Z_n^{(2)} \quad \textnormal{on~the~event} \quad \{ Z_1 = k_0 \} \cap \{ Z_n^{(1)} > 0 \},\\
Z_n \geq Z_n^{(1)} + Z_n^{(2)} \quad \textnormal{on~the~event} \quad \{ Z_1 \geq k_0 \} \cap \{ Z_n^{(1)} > 0 \}.
\end{align}
\end{lemma}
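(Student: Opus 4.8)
The plan is to treat the three assertions (20), (21), and (22) in turn, each by an induction on $n$. For the distributional identity (20) I would first observe that $(Z_n^{(1)})_{n\ge1}$ is driven by the i.i.d.\ sequence $((\xi_{n+1,j})_{j\ge1},Y_{n+1})_{n\ge1}$ through exactly the recursion defining $(Z_n)_{n\ge0}$, with $Z_1^{(1)}=k=Z_0$; since a mere shift of the i.i.d.\ driving index leaves the law of the resulting Markov chain unchanged, $(Z_n^{(1)})_{n\ge1}\overset{\textnormal{d}}{=}(Z_n)_{n\ge0}$ is immediate. For $(Z_n^{(2)})_{n\ge1}$ I would work with the filtration $\mathcal F_n:=\sigma((\xi_{m,j})_{j\ge1},Y_m : 2\le m\le n)$, with respect to which $Z_n^{(1)},Z_n^{(2)}$ are measurable and $((\xi_{n+1,j})_{j\ge1},Y_{n+1})$ is independent of $\mathcal F_n$. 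Conditioning on $\mathcal F_n$ fixes $Z_n^{(1)}=a$ and $Z_n^{(2)}=b$, and then $Z_{n+1}^{(2)}=\sum_{j=a+1}^{a+b}\xi_{n+1,j}$ is a sum of $b$ i.i.d.\ offspring variables independent of $\mathcal F_n$; this is precisely the Galton--Watson recursion with $Z_1^{(2)}=k'$, so $(Z_n^{(2)})_{n\ge1}\overset{\textnormal{d}}{=}(Z'_n)_{n\ge0}$ with $Z'_0=k'$.

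For the independence under (IND) I would again induct on $n$, showing that $\sigma(Z_1^{(1)},\dots,Z_n^{(1)})$ and $\sigma(Z_1^{(2)},\dots,Z_n^{(2)})$ are independent; this step is the \emph{delicate point}. Conditionally on $\mathcal F_n$ the values $Z_n^{(1)}=a$ and $Z_n^{(2)}=b$ are constants, $Z_{n+1}^{(1)}$ is a function of $(Y_{n+1},\xi_{n+1,1},\dots,\xi_{n+1,a})$, while $Z_{n+1}^{(2)}$ is a function of the block $(\xi_{n+1,a+1},\dots,\xi_{n+1,a+b})$. Because the $\xi_{n+1,\cdot}$ are i.i.d.\ in $j$, disjoint index blocks are independent; and because (IND) gives $Y_{n+1}\perp(\xi_{n+1,j})_{j\ge1}$, the $Z^{(2)}$-block is also independent of $Y_{n+1}$. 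Hence the two input groups are conditionally independent given $\mathcal F_n$, and the conditional law of the $Z^{(2)}$-block does not depend on $a$; feeding this factorisation into the inductive hypothesis upgrades it to level $n+1$. The one thing one must guard against is that the starting index $a+1$ of the $Z^{(2)}$-block is itself random and carries information about the $Z^{(1)}$-history; this is exactly where (IND), decoupling $Y$ from $\xi$, together with the i.i.d.\ structure of the offspring variables, is needed, since without it the random cut could correlate the two families.

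Finally, for the pathwise relations (21) and (22) I would use that $(Z_n^{(1)})_{n\ge1}$ is absorbed at $0$, so $\{Z_n^{(1)}>0\}=\{Z_m^{(1)}>0 \textnormal{ for all } 1\le m\le n\}$: on this event the first group is alive at every intermediate step. Inducting on $m$ under $\{Z_1=k_0\}$, the base case reads $Z_1=k_0=Z_1^{(1)}+Z_1^{(2)}$; assuming $Z_m=Z_m^{(1)}+Z_m^{(2)}$ and $Z_{m+1}^{(1)}>0$ (which forces $Z_m^{(1)}>0$), the positivity of $Z_{m+1}^{(1)}$ means the truncation in $Z_{m+1}^{(1)}=(\sum_{j\le Z_m^{(1)}}\xi_{m+1,j}-Y_{m+1})_+$ is inactive, so adding the fresh-offspring increment $Z_{m+1}^{(2)}$ reconstitutes $\sum_{j\le Z_m^{(1)}+Z_m^{(2)}}\xi_{m+1,j}-Y_{m+1}$, which is positive and hence equals $Z_{m+1}$; this yields (21). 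For (22) I would run the same induction starting from $Z_1\ge k_0$, replacing the two equalities by the monotonicity of $u\mapsto\sum_{j\le u}\xi_{m+1,j}$ and of $(\cdot)_+$, so that $Z_m\ge Z_m^{(1)}+Z_m^{(2)}$ propagates to $m+1$. The main obstacle throughout is the independence step; the distributional and pathwise parts are routine inductions once the absorption of $(Z_n^{(1)})_{n\ge1}$ is exploited.
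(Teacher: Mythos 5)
Your proposal is correct and follows essentially the same route as the paper: identify the two processes as Markov chains with the transition kernels of $(Z_n)_{n\ge0}$ and $(Z'_n)_{n\ge0}$, factorize the one-step conditional transitions using the disjoint index blocks together with (IND), and verify (21)--(22) by a direct induction exploiting that $(Z_n^{(1)})_{n\ge1}$ is absorbed at $0$. The only difference is presentational: you make explicit the induction chaining the one-step factorizations into independence of the full trajectories, and you supply the pathwise induction that the paper leaves to the reader.
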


\begin{proof}[Proof of Lemma 5]
By construction both $(Z_n^{(1)})_{n \geq 0}$ and $(Z_n^{(2)})_{n \geq 0}$ are time-homogeneous Markov chains with the same transition probabilities as $(Z_n)_{n \geq 0}$ respectively $(Z'_n)_{n \geq 0}$. Hence, by comparing the initial states, (20) directly follows.\\

By inserting the definitions of $Z_n^{(1)}$ and $Z_n^{(2)}$, one can straightforward verify both (21) and (22). The details are therefore omitted.\\

\noindent Finally, assume (IND) and let $a_1,a_2,b_1,b_2 \in \mathbb{N}$. Then, for all $n,m \geq 1$,
\begin{align*}
&P \big[ Z_{n+1}^{(1)} = a_2,~Z_{m+1}^{(2)}= b_2 ~\vert~ Z_n^{(1)} = a_1,~Z_m^{(2)} = b_1 \big] \\
=&P \Bigg[ \Bigg( \sum\limits_{j=1}^{a_1} \xi_{n+1,j} - Y_{n+1} \Bigg)_+ = a_2,~ \sum\limits_{j=a_1 + 1}^{a_1 + b_1 } \xi_{m+1,j} = b_2 \Bigg] \\
=& P \Bigg[ \Bigg( \sum\limits_{j=1}^{a_1} \xi_{n+1,j} - Y_{n+1} \Bigg)_+ = a_2 \Bigg]~ P \Bigg[ \sum\limits_{j=a_1 + 1}^{a_1 + b_1} \xi_{m+1,j} = b_2 \Bigg]\\
=& P \left[ Z_{n+1}^{(1)} = a_2~\vert~ Z_n^{(1)} = a_1 \right]~ P \left[ Z_{m+1}^{(2)} = b_2~\vert~Z_m^{(2)} = b_1 \right].
\end{align*}
Hence transitions of $\big( Z_n^{(1)} \big)_{n \geq 1}$ and $\big (Z_n^{(2)} \big)_{n \geq 0}$ are independent and the claim follows by recalling that the initial states are chosen constant.
\end{proof}

\begin{proof}[Proof of Theorem 4]
(a). Let $P[W>0]>0$. Then $P[\tau= \infty]>0$ and hence, by recalling Theorem 1, we immediately obtain $E[ \log_+ Y] > \infty$. Besides, using $Z_n \leq Z'_n$ for $Z_0 = Z'_0 = k$ and applying the classical Kesten-Stigum theorem, see \cite{KS66}, we directly conclude $E[ \xi \log_+ \xi] < \infty$.\\

\noindent On the contrary, let us assume $E[ \log_+ Y] > \infty$ and $E[ \xi \log_+ \xi] < \infty$. Then $P[ \tau = \infty] > 0$ due to Theorem 1 and hence it suffices to show
\[
P[W>0] = P[ \tau = \infty].
\]
In order to obtain this, we note that $\{ W > 0 \} \subseteq \{ \tau = \infty \}$ and verify
\begin{align}
P [ W=0,~ \tau = \infty] = 0.
\end{align}
Due to (H) we know $\{ \tau = \infty \} = \{ Z_n \rightarrow \infty \}$ almost surely. Moreover, we also know that $W$ is monotone with respect to the initial population size $Z_0 = k$. Hence
\begin{align}
P[ W = 0,~\tau = \infty] &\leq \liminf\limits_{ k \rightarrow \infty} P[ W =0~\vert~ Z_0 = k]\\
& = 1 - \limsup\limits_{k \rightarrow \infty} P[ W > 0~\vert~ Z_0 = k].
\end{align}
Observe that for each $k \geq 1$ we may choose $k_0 = k_0(k) > k $ such that
\begin{align}
\limsup_{k \rightarrow \infty} P[ Z_1 \geq k_0(k)~ \vert ~ Z_0 = k] = 1
\end{align}
and $k_0 ( k)- k \rightarrow \infty$ for $k \rightarrow \infty$. Fix $k \geq 1$, $k_0 = k_0 (k)$ and assume $Z_0 = k$. Then, by using the notation introduced in Lemma 5 and (22),
\begin{align}
P[ W > 0] &\geq P [ Z_1 \geq k_0 ]~P \Big[ \forall n: Z_n^{(1)} > 0,~\lim\limits_{n \rightarrow \infty} \lambda^{-n} Z_n^{(2)} > 0 \Big].
\end{align}
Recalling (20) and Proposition 1 we know
\begin{align}
P[ \forall n >0 : Z_n^{(1)} > 0] =  1 - q_k \rightarrow 1 \quad \textnormal{for~} k \rightarrow \infty.
\end{align}
On the other hand, by (20) and the classical Kesten-Stigum theorem,
\[
P \left[ \lim\limits_{n \rightarrow \infty} \lambda^{-n} Z_n^{(2)} > 0 \right] = P \left[ W' > 0~\vert~ Z'_0 = k_0(k) -k \right] = 1 - (q')^{k_0(k) - k},
\]
and, since $k_0 (k) - k \rightarrow \infty$ for $k \rightarrow \infty$, we further obtain
\begin{align}
\lim\limits_{k \rightarrow \infty} P \left[ \lim\limits_{n \rightarrow \infty} \lambda^{-n} Z_n^{(2)} > 0 \right] = 1. 
\end{align}
By combining (26), (28) and (29) with (27), we conclude 
\[
\limsup\limits_{k \rightarrow \infty} P[ W > 0 ~\vert~ Z_0 = k] = 1,
\]
and hence (23) and the claim follows by recalling (24) and (25).\\

\noindent (b). We first prove the claim for $a:=0$ via contradiction. Let us assume that there is $b \in (0,\infty)$ such that $P[ 0 < W < b] = 0$ and $P[ W \geq b + \varepsilon] > 0$ for all $\varepsilon > 0$. Then choose $\varepsilon > 0$ and $\delta > 0$ such that
\begin{align}
\tilde{b} := \lambda^{-1} ( b + \varepsilon) + \delta < b. 
\end{align} 
Also fix a $k_0 > k$ with $P[ Z_1 = k_0] > 0$ and again recall the notation introduced in Lemma 3. Then, by the decomposition (21) and (30),
\begin{align*}
&P[  0 < W <  \tilde{b}] \\
\geq~ &P[ Z_1 = k_0]~ P \bigg[ \lim\limits_{n \rightarrow \infty} \lambda^{-n} Z_n^{(1)} \in \left( 0, \lambda^{-1} (b + \varepsilon) \right),~\lim\limits_{n \rightarrow \infty} \lambda^{-n} Z_n^{(2)} < \delta \bigg].
\end{align*}
By using (IND) and Lemma 5 we further deduce
\begin{align*}
P[0 < W < \tilde{b}] \geq P[ Z_1 = k_0] ~P [ 0 < W < b + \varepsilon]~P \left[0 < W' < \lambda \delta \right],
\end{align*}
where we assume $Z_0 = k$ and $Z'_0 = k_0- k$. Observe that the first two probabilities on the right hand side of this inequality are positive, and this is also true for the third factor. This follows e.g.\ from the fact that $W'$ has a strictly positive Lebesgue density on $(0,\infty)$, compare e.g.\ Chapter 1, Part C of \cite{AN72}.\\

So, all in all, $P[0 < W < \tilde{b}] > 0$, which is a contradiction to our assumptions on $b \in (0,\infty)$. Hence the claim holds for $a:=0$.\\

For arbitrary $a > 0$ we can choose $\varepsilon > 0$ with $\varepsilon < \min (a,b-a)$. Then, by the same arguments as above and assuming $Z_0 = k$, $Z'_0 = k_0 - k$,
\begin{align*}
&P[ a < W < b] \\
\geq ~&P [ Z_1 = k_0]~ P[0 < W < \lambda \varepsilon]~P [\lambda (a - \varepsilon) <  W' < \lambda ( b - \varepsilon)].
\end{align*}
Since we already have verified the claim for $a:=0$ we indeed conclude $P[a < W < b]> 0$.
\end{proof}

\section*{Appendix 1: Ratio tests for Example 1}

\noindent Firstly, let us assume $c > \log \lambda$. Fix $N \in \mathbb{N}$, $\varepsilon > 0$ and let
\[
a_n := \prod\limits_{m=N}^n P \left[ Y \leq  (\lambda + \varepsilon)^m \right], \quad n \geq N.
\]

\noindent By choosing $N$ big enough we can guarantee that for all $n \geq N$
\begin{align*}
\frac{a_{n+1}}{a_n} &= 1 -  P \left[ \log Y >  (n+1) \log (\lambda + \varepsilon) \right] \\
& = 1 - \frac{c}{ \log(\lambda + \varepsilon)} (n+1)^{-1}
\end{align*}

\noindent Fix $\varepsilon > 0$ with $c > \log( \lambda + \varepsilon)$. Then, by Raabe's test, $\sum_{n \geq N} a_n < \infty$.\\

\noindent Secondly, let us assume $c = \log \lambda$. Fix $N \in \mathbb{N}$, $\theta \in (1,\infty)$ and let
\[
b_n := \prod\limits_{m=N}^n P \left[ Y \leq \lambda^m m^{-\theta} \right], \quad n \geq N.
\]

\noindent By choosing $N$ big enough we can guarantee that for all $n \geq N$
\begin{align*}
\frac{b_{n+1}}{b_n} &= 1 -  P \left[ \log Y >  (n+1) \log (\lambda ) - \theta \log(n+1) \right] \\
& = 1 - \frac{\log \lambda}{(n+1) \log(\lambda) - \theta \log(n+1)}\\
&= 1 - \frac{1}{n+1 - \beta  \log(n+1)}, \quad \quad \textnormal{where}~ \beta := \frac{\theta}{\log \lambda}.
\end{align*}
For $n \rightarrow \infty$ we have the series expansion
\[
\frac{1}{n+1 - \beta \log(n+1)} = \frac{1}{n} + \frac{\beta \log(n) - 1}{n^2} + O \left( \frac{1}{n^2} \right),
\]~

\noindent and hence, by applying the Gauss test, we find $\sum_{n \geq N} b_n = \infty$.\\

\section*{Appendix 2: Notes on the recursion $x_{n+1} = a x_n - b_{n+1}$}

The proofs of the following two claims are elementary and therefore are left to the reader.

\begin{lemma}
Let $a \in (1,\infty)$, $x_0 =1 $ and $\varepsilon > 0$. Then there exists $N \in \mathbb{N}$ and $c_1,\ldots,c_N \in (0,\infty)$ such that for the sequence $(x_n)_{n \geq 0}$ defined by
\[ x_{n+1} := \left\{ \begin{array}{l l}
a x_n - (a - \varepsilon)^{n}, &n \geq N\\ \\
a x_n - c_{n+1},& n \leq N - 1,
\end{array} \right.  \]

\noindent there exists $c > 1$ with $x_n \geq c^n > 0$ for all $n \geq 1$. For example, one can fix some $\delta \in (0,1)$, $N \in \mathbb{N}$ and set $c_n := \delta (a- \varepsilon)^n$ for all $n =1,\ldots,N$.
\end{lemma}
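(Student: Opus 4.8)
The plan is to reduce to the regime $a-\varepsilon>1$, solve the linear recursion in closed form on the tail $n\geq N$, and then use the initial block of length $N$ only to guarantee that the coefficient of $a^n$ in that explicit solution is strictly positive. The first observation is that we may assume $a-\varepsilon>1$. Indeed, replacing $\varepsilon$ by any smaller $\varepsilon'\in(0,\varepsilon]$ enlarges every subtracted term $(a-\varepsilon)^n$, so that, keeping the same $N$ and the same constants $c_1,\dots,c_N$, one obtains a pointwise smaller sequence. Hence a bound $x_n\geq c^n$ proved for the parameter $\varepsilon':=\min(\varepsilon,(a-1)/2)$, for which $a-\varepsilon'\geq(a+1)/2>1$, transfers to the original $\varepsilon$ by a one-line comparison and induction on $n\geq N$ (using that the two sequences agree up to $n=N$). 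From now on $a-\varepsilon>1$.

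Next I would write the solution explicitly. The inhomogeneous recursion $x_{n+1}=ax_n-(a-\varepsilon)^n$ admits the particular solution $\varepsilon^{-1}(a-\varepsilon)^n$ (insert $A(a-\varepsilon)^n$ and solve $A(a-\varepsilon)=aA-1$), so for $n\geq N$
\[ x_n = C a^n + \varepsilon^{-1}(a-\varepsilon)^n, \qquad C = a^{-N}\Big( x_N - \varepsilon^{-1}(a-\varepsilon)^N \Big). \]
Similarly, choosing $c_n:=\delta(a-\varepsilon)^n$ with $\delta\in(0,1)$ as suggested in the statement, the initial block solves to $x_n = D a^n + \delta\varepsilon^{-1}(a-\varepsilon)^{n+1}$ for $0\leq n\leq N$, where $D = 1-\delta(a-\varepsilon)/\varepsilon\to 1$ as $\delta\to 0$. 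In particular $x_n\geq D a^n$ for $0\leq n\leq N$ and $x_n\geq C a^n$ for $n\geq N$.

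The decisive step is to force $C>0$. Since $x_N\geq D a^N$ and $((a-\varepsilon)/a)^N\to 0$, one has $C\geq D-\varepsilon^{-1}((a-\varepsilon)/a)^N$; thus fixing $\delta$ small (so $D$ is close to $1$) and then $N$ large makes both $C$ and $D$ exceed $a^{-1}$. Setting $\gamma:=\min(C,D)>a^{-1}>0$ gives $x_n\geq\gamma a^n$ for every $n\geq 0$. If $\gamma\leq 1$ then $x_n\geq\gamma a^n\geq(\gamma a)^n$ for all $n\geq 1$, because $\gamma\geq\gamma^n$ for $0<\gamma\leq1$ and $n\geq1$, so $c:=\gamma a>1$ works; if instead $\gamma\geq1$ then $x_n\geq a^n$ and one simply takes $c:=a>1$. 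Either way $x_n\geq c^n>0$ for all $n\geq 1$, as required.

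The main obstacle is exactly the positivity of $C$. The forcing $(a-\varepsilon)^n$ injects the harmless mode $\varepsilon^{-1}(a-\varepsilon)^n$, but the geometric lower bound $x_n\geq c^n$ can only come from the $a^n$-mode, and this mode carries a genuinely positive coefficient only after the initial steps have built up $x_N$ well above $\varepsilon^{-1}(a-\varepsilon)^N$. Arranging this uniformly, together with strict positivity on the short initial block, is what dictates choosing $\delta$ small and $N$ large; the remaining manipulations are routine estimates for geometric sequences, which is presumably why the author leaves them to the reader.
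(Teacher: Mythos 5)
The paper offers no proof to compare against: Lemma 6 is explicitly "left to the reader," so your explicit-solution argument is the only one on record. The core of your proof is correct and complete in the regime $a-\varepsilon>1$: the particular solutions $\varepsilon^{-1}(a-\varepsilon)^n$ and $\delta\varepsilon^{-1}(a-\varepsilon)^{n+1}$ are computed correctly, the bound $C\geq D-\varepsilon^{-1}\left((a-\varepsilon)/a\right)^N$ with $D=1-\delta(a-\varepsilon)/\varepsilon$ is right, and the conversion of $x_n\geq\gamma a^n$ with $\gamma>a^{-1}$ into $x_n\geq c^n$ for some $c>1$ is sound. This is also the only regime the paper ever uses (in the proof of Lemma 3 one has $a=\lambda_1$ and $a-\varepsilon=\lambda_0>1$).

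The one genuinely shaky step is the opening reduction. Your claim that shrinking $\varepsilon$ to $\varepsilon'$ "enlarges every subtracted term $(a-\varepsilon)^n$" is only valid when $(a-\varepsilon)^n\leq(a-\varepsilon')^n$ for all $n$, which requires $|a-\varepsilon|\leq a-\varepsilon'$; it fails for even $n$ once $\varepsilon>2a-\varepsilon'$, because then $(a-\varepsilon)^n=(\varepsilon-a)^n$ exceeds $(a-\varepsilon')^n$. In fact the lemma as literally stated is false for $\varepsilon>2a$: for $n\geq N$ the solution is forced to be $Ca^n+\varepsilon^{-1}(a-\varepsilon)^n$, and when $|a-\varepsilon|>a$ the oscillating particular solution dominates $a^n$ and drives $x_n$ negative infinitely often, no matter how $N$ and $c_1,\dots,c_N$ are chosen. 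So you cannot fully dispose of large $\varepsilon$ by monotone comparison; you should instead either restrict the statement to $a-\varepsilon>0$ (or $>1$, as in the paper's application), or argue directly from the closed form that the conclusion holds exactly when $|a-\varepsilon|\leq a$. With that caveat recorded, the rest of your argument stands.
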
~

\begin{lemma}
Let $N \in \mathbb{N}$, $a \in (1,\infty)$, $x_0 = 1$ and $\varepsilon_1 > 0$ with $a- \varepsilon_1 > 1$. Then there exists $\varepsilon_2 > 0$ with the following property.\\

\noindent For all $l=1,\ldots,N-1$ the recursion defined by\\
\[ x_{n+1} := a x_n - b_n,  \quad \textnormal{where}~~ b_n := \left\{ \begin{array}{l l}
(a - \varepsilon_1)^l,&n=l,\\ \\
\varepsilon_2 ( a - \varepsilon_1)^n,&n \neq l,\\
\end{array} \right.\\
\]~\\
satisfies $x_{j} \geq \varepsilon_1 > 0$ for all $j=1,\ldots,N$.
\end{lemma}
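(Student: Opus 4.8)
The plan is to solve the linear recursion explicitly and to treat the single large drop at step $l$ separately from the many small $\varepsilon_2$-drops. Writing $\mu := a - \varepsilon_1$, so that $\mu > 1$ and $a > \mu > 0$, one obtains by iterating $x_{n+1} = a x_n - b_n$ from $x_0 = 1$ the closed form
\[
x_n = a^n - \sum_{k=0}^{n-1} a^{\,n-1-k}\, b_k, \qquad n \geq 1.
\]
I would then split $x_n = y_n - \varepsilon_2 P_n$, where $y_n$ is the value obtained by keeping only the large drop (that is, $b_l = \mu^l$ and all other $b_k = 0$) and $P_n := \sum_{k \neq l,\, k < n} a^{\,n-1-k}\mu^k \geq 0$ collects the contributions of the small drops. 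This reduces the claim to two independent tasks: bounding $y_n$ from below uniformly in $l$, and bounding $P_n$ from above uniformly in $l$ and $n$.

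For the first task, observe that $y_n = a^n$ for $n \leq l$, while $y_n = a^{\,n-1-l}\bigl(a^{l+1} - \mu^l\bigr)$ for $n \geq l+1$. Since $a > 1$, the sequence is increasing on each of the two ranges, so its minimum over $1 \leq n \leq N$ is attained either at $n = 1$ (value $a$) or at the index $n = l+1$ immediately after the large drop (value $a^{l+1} - \mu^l$). The key computation is that $l \mapsto a^{l+1} - \mu^l$ is increasing, since its increment is $a^{l+1}(a-1) - \mu^l(\mu-1) > 0$ because $a > \mu > 1$; hence it is minimised at $l = 1$, yielding the uniform lower bound $y_n \geq \gamma := \min\bigl(a,\, a^2 - a + \varepsilon_1\bigr)$. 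As $a^2 - a > 0$ and $a > \varepsilon_1$, this leaves the strictly positive gap $\gamma - \varepsilon_1 = \min\bigl(\mu,\, a(a-1)\bigr)$, which is independent of $l$.

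For the second task, a geometric-series estimate gives, for every $l$ and every $n \leq N$,
\[
P_n \leq \sum_{k=0}^{n-1} a^{\,n-1-k}\mu^k = a^{n-1}\sum_{k=0}^{n-1}(\mu/a)^k < \frac{a^n}{a-\mu} = \frac{a^n}{\varepsilon_1} \leq \frac{a^N}{\varepsilon_1}.
\]
I would then fix $\varepsilon_2 := \varepsilon_1(\gamma - \varepsilon_1)\,a^{-N}$, so that $\varepsilon_2 P_n < \gamma - \varepsilon_1$, and conclude $x_n = y_n - \varepsilon_2 P_n > \gamma - (\gamma - \varepsilon_1) = \varepsilon_1$ for all $n \in \{1,\dots,N\}$ and all $l \in \{1,\dots,N-1\}$, as required (the case $N=1$ being vacuous). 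The only genuinely delicate point is that a \emph{single} $\varepsilon_2$ must serve for all $l$ simultaneously; this is exactly what the two uniform bounds secure, the decisive observation being that the worst case of the unperturbed sequence occurs at $l=1$, right after the large drop, and still leaves a definite margin above $\varepsilon_1$ into which the small $\varepsilon_2$-perturbations can be absorbed.
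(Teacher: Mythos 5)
Your proof is correct, and in fact the paper offers nothing to compare it with: Lemma~7 is stated in Appendix~2 with the remark that its proof is ``elementary and therefore left to the reader,'' so you have supplied an argument where the paper supplies none. Your route --- solving the affine recursion explicitly as $x_n = a^n - \sum_{k=0}^{n-1} a^{n-1-k} b_k$, isolating the single large drop into $y_n$ and the $\varepsilon_2$-drops into $P_n$, and then checking (i) that $y_n$ is minimised at $n=1$ or at $n=l+1$ with the worst case over $l$ occurring at $l=1$, leaving the positive margin $\min\bigl(a-\varepsilon_1,\,a(a-1)\bigr)$ above $\varepsilon_1$, and (ii) that $P_n < a^N/\varepsilon_1$ uniformly --- is sound in every step I checked, and it correctly addresses the one delicate point, namely that a single $\varepsilon_2$ must work for all $l \in \{1,\dots,N-1\}$ simultaneously. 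The presumably intended ``elementary'' alternative would be an induction maintaining a lower bound of the form $x_n \geq \delta a^n$ up to the drop and a fixed positive floor afterwards; your closed-form decomposition buys a completely explicit admissible value $\varepsilon_2 = \varepsilon_1(\gamma-\varepsilon_1)a^{-N}$ and makes the uniformity in $l$ transparent rather than hidden in an induction hypothesis. Two cosmetic remarks only: you prove the strict inequality $x_j > \varepsilon_1$, which of course implies the stated $x_j \geq \varepsilon_1$; and you are reading the recursion literally as written in the lemma ($x_{n+1} = a x_n - b_n$), which is the right thing to do even though the appendix title writes $b_{n+1}$ --- worth a one-line remark if this were to be incorporated, but not a gap in your argument.
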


\end{document}